\newcommand{\msc}[2][2000]{%
  \let\@oldtitle\@title%
  \gdef\@title{\@oldtitle\footnotetext{#1 \emph{Mathematics subject
        classification.} #2}}%
}
\theoremstyle{plain}
\newtheorem{theorem}{Theorem} [section]
\newtheorem{definition}[theorem]{Definition}
\newtheorem{lemma}[theorem]{Lemma}
\newtheorem{proposition}[theorem]{Proposition}
\theoremstyle{remark}
\newtheorem{remark}[theorem]{Remark}
\newtheorem*{notation}{Notation}
\def\C{{\mathbb C}}
\def\R{{\mathbb R}}
\def\N{{\mathbb N}}
\def\Z{{\mathbb Z}}
\def\T{{\mathbb T}}
\def\Sch{{\mathcal S}}
\def\O{\mathcal O}
\def\F{\mathcal F}
\def\E{\mathcal E}
\def\u{\mathbf u}
\def\dd{\mathrm d}
\def\({\left(}
\def\){\right)}
\def\<{\left\langle}
\def\>{\right\rangle}
\def\le{\leqslant}
\def\ge{\geqslant}
\def\Eq#1#2{\mathop{\sim}\limits_{#1\rightarrow#2}}
\def\Tend#1#2{\mathop{\longrightarrow}\limits_{#1\rightarrow#2}}
\def\d{{\partial}}
\def\eps{\varepsilon}
\def\si{{\sigma}}
\DeclareMathOperator{\RE}{Re}
\DeclareMathOperator{\IM}{Im}
\numberwithin{equation}{section}
\begin{document}

\title[Frequency cascade in NLS]{A toy model for frequency cascade in
  the nonlinear Schr\"odinger equation}
\author[R. Carles]{R\'emi Carles}
\author[E. Faou]{Erwan Faou}
\address{Univ Rennes, CNRS\\ IRMAR - UMR
  6625\\ F-35000 Rennes, France}
\email{Remi.Carles@math.cnrs.fr}
\email{Erwan.Faou@inria.fr}

\begin{abstract}
  We present an elementary approach to observe frequency cascade on
  forced nonlinear Schr\"odinger equations. The forcing term
  (which
  may also appear as a potential term instead)  consists
  of a constant term, perturbed by a modulated Gaussian well. Algebraic
  computations provide an explicit frequency cascade when time and
  space derivatives are discarded from the nonlinear Schr\"odinger
  equation. We prove stability results, showing that when
  derivatives are incorporated in the model, the initial algebraic
  solution 
  may be little affected, possibly over long time intervals. Numerical
  simulations are provided, which support the analysis. 
\end{abstract}
\thanks{EF is supported by the
Simons Collaboration Grant on Wave Turbulence. A CC-BY public
copyright license has been applied by the authors to the present
document and will be applied to all subsequent versions up to the
Author Accepted Manuscript arising from this submission. }  

\maketitle

\section{Introduction}
\label{sec:intro}

\subsection{Setting}
\label{sec:setting}
The phenomenon of frequency cascade originates from the theory of
turbulence in fluid mechanics
\cite{K1,K2,K3} and in wave turbulence see
e.g. \cite{Nazarenko,Zakharov84,Zakharov92}.  
A paradigmatic description in experimental physics can be viewed as a
nonlinear system (typically a wave like system exemplified by the
nonlinear Schr\"odinger equation) forced in a low frequency range,
giving rise to some distribution of energy following some power law of
the form $|\xi|^{-\nu}$. These power law rules can be explained by
many physical arguments involving statistical models, random forcing,
kinetic limit, etc. 

The mathematical approach to 
justify the derivation of equations describing turbulent phenomena
has been very intense, involving very sophisticated tools and
analysis.
The general stragegy is to derive a domain a validity for a wave
kinetic equation possessing power law solution. The analysis can be
performed by considering random initial conditions for a dispersive
equation 
set on a large torus \cite{MMT97,BGHS21,DengHani23,CollotGermain25}, 
leading to resonance analysis and continuous limit arguments. In a
parallel approach, random forcing can be considered,  
\cite{Kuksin97,Faou20,GrandeHani-p}, as well as model on the whole space 
\cite{ACG25,HRST-p,HSZ24,FaouMouzard24}, with specific initial
conditions.

Our goal here is to propose an elementary model, certainly too simple to
meet the actual physics described in the above references, for which
the frequency 
cascade phenomenon is rather explicit. We retain the idea of forcing a
wave system at low frequency and recovering a power law energy
distribution as stationary solution.  
More precisely, we consider the nonlinear Schr\"odinger equation on $\R^d$ (or on the torus $\T^d$, see Remark \ref{remarktorus}), with an
external forcing $f$,
\begin{equation}\label{eq:NLS}
  i\d_t u +\eps^2\Delta u = |u|^{2\si}u - f,\quad x\in \R^d, 
\end{equation}
where $\si>0$, and we ask the following question:
from a smooth forcing term $f$ 
with Fourier transform $\hat f(\xi)$ localized around $0$, can we
  obtain singular or quasi-singular solutions $u$ exhibiting power law
  frequency $\hat u (\xi) \sim |\xi|^{-\nu}$ for some $\nu$, at least
  in some frequency range?  

Note that such a phenomenon must be  nonlinear, as this cannot hold
for the linear equation without the term $|u|^{2\si}u$. Moreover, if
$f$ is smooth, standard existence results show that the solution to
\eqref{eq:NLS} remains smooth in general, so a solution cannot exhibit
a frequency behavior of the form $|\xi|^{-\nu}$ for \emph{all} $\xi
\in \R^d$, 
but only on some (possibly large) range of frequency. This leads us to the notion
of quasi-singular solution, {\em i.e.} we propose a model where
solutions can be of that type.

We consider two cases of forcing term $f$ in \eqref{eq:NLS}:

\begin{itemize}
\item Time independent forcing, $f=f(x)$,
\item Time periodic forcing, $f(t,x)= 2Q(x) e^{-3iPt}$, where $P$ is
  a constant, in the case of a cubic nonlinearity $\si=1$.
\end{itemize}
We do not consider the presence of other dissipative terms in the
general presentation, but we could add a term of the form
$i\nu\eps^2\Delta u$ ($\nu>0$) on the right hand side of
\eqref{eq:NLS}. The stability analysis stated in
Theorem~\ref{theo:stab-strichartz} would remain valid (and could be
possibly strengthened thanks to dissipative effects), see also
Remark~\ref{rem:damping}.
\smallbreak

In many situations, the starting point to analyze systems of the form
\eqref{eq:NLS} is the linear equation, or a 
specific solution of the stationary Schr\"odinger equation (like solitons), from which
perturbative arguments are developed. Our approach is somehow the
opposite, in the sense that we first remove the differential operators
from \eqref{eq:NLS}. We obtain solutions by algebraic means, where singularities or quasi-singularities of Puiseux type $z^{1/p}$ may naturally develop,  and then
examine their stability when the differential operators are
incorporated.

\subsection{Cascade cartoon}
\label{sec:cartoon}

When $f$ does not depend on time, we consider, for $x\in \R^d$,
$\delta\ge 0$, the  forcing 
\begin{equation*}
  f (x) = 1-e^{-\delta} e^{-|x|^2/2}.
\end{equation*}
The algebraic solution $U$ that we consider solves
\begin{equation*}
  |U|^{2\si}U = f.
\end{equation*}
We first remark that $U$ can be taken independent of time, and that $f$ being positive,
the above equation has at least one real-valued solution, given by
\begin{equation*}
  U(x) = f(x)^{\frac{1}{2\si+1}} \in \R. 
\end{equation*}
We note that since $\si>0$, the power
  $\frac{1}{2\si+1}$ lies in $(0,1)$.
If we assume $\delta=0$, we can write
\begin{equation}\label{eq:factor-sing}
  f^{\frac{1}{2\si+1}}-1 = |x|^{\frac{2}{2\si+1}}\psi(x),
\end{equation}
where $\psi$ is smooth everywhere, and rapidly decaying at
infinity.
We normalize the Fourier transform as
\begin{equation*}
  \F
  \psi(\xi)=\hat\psi(\xi):=\frac{1}{(2\pi)^{d/2}}\int_{\R^d}e^{-ix\cdot
    \xi}\psi(x)\dd x,\quad \psi\in \Sch(\R^d),
\end{equation*}
so that Plancherel formula reads $\|\hat\psi\|_{L^2(\R^d)} =
\|\psi\|_{L^2(\R^d)}$. 
Taking the Fourier transform in \eqref{eq:factor-sing} (as 
tempered distributions), we can write
\begin{equation*}
  \F\( f^{\frac{1}{2\si+1}}-1 \) = \F\(
  |x|^{\frac{2}{2\si+1}}\)\ast\F(\psi). 
\end{equation*}
In view of e.g. \cite[Theorem~7.1.16]{Hormander1}, $\F\(
  |x|^{\frac{2}{2\si+1}}\)$ is
  $\(-d-\frac{2}{2\si+1}\)$-homogeneous. Since the map $x\mapsto
  |x|^{\frac{2}{2\si+1}}$ 
  is radially symmetric, so is its Fourier transform, and
\begin{equation*}
  \F \( |x|^{\frac{2}{2\si+1}}\) (\xi) = c(d,\si)
  |\xi|^{-d-\frac{2}{2\si+1}},
\end{equation*}
which enjoys an explicit frequency decay. The idea now is that for
$\delta>0$,  a similar cascade remains, up to restricting the range of
frequencies, and that the solution thus obtained is sufficiently
stable so the phenomenon remains when going back to the nonlinear
Schr\"odinger equation \eqref{eq:NLS}.

\subsection{Properties of the algebraic solution}
\label{sec:res1}

The rigorous meaning of the comparison symbols used in this
article is as follows:
\begin{notation}
  For $a,b\ge 0$, we write $a\approx b$ if there exists $C>1$ such
  that
  \begin{equation*}
    \frac{1}{C}a\le b\le C a.
  \end{equation*}
  We write $a\ll b$ if there exists $\eta\in (0,1)$ (small enough)
  such that
  \begin{equation*}
    a\le \eta b.
  \end{equation*}
\end{notation}
Our first result concerns the algebraic solution, denoted by $U$
above: we check that  an explicit  frequency cascade occurs, on a
frequency range depending on the parameter $\delta$. 
\begin{theorem}\label{theo:law-f}
  Let $d\ge 1$, and for $\delta>0$, consider
 \begin{equation*}
  f (x) = 1-e^{-\delta} e^{-|x|^2/2}.
\end{equation*}
Let $\alpha\in (0,1)$. Then the Fourier transform of $f^\alpha-1$
satisfies
\begin{equation*}
 \left| \F\(f^\alpha-1\)(\xi)\right| \approx |\xi|^{-2\alpha-d},
\end{equation*}
provided that $\xi$ is restricted to a region of the form
\begin{equation*}
 0<\xi_0^2\le |\xi|^2\ll \frac{1}{\delta},
\end{equation*}
for $\xi_0>0$ arbitrary. 
\end{theorem}
\begin{remark}
\label{remarktorus}[Spatially periodic case]
  In the case where $x$ belongs to the circle, $x\in \T=\R/(2\pi\Z)$,
  the above  result can be adapted, by considering
  \begin{equation*}
    {\mathbf f}(x) = 1- e^{-\delta}e^{\cos x -1}.
  \end{equation*}
  Computations are similar to those presented in the proof of
  Theorem~\ref{theo:law-f}, up to the introduction of modified
  Bessel functions of the first kind, and, again, the use of Stirling
  formula. Detailed computations are cumbersome though, and we leave
  out this framework here. We simply note that when the radius of the
  circle becomes unbounded, a natural connection with the model
  considered on $\R$ appears: for $x\in \T_L=\R/(2\pi L\Z)$, we may set
  \begin{equation*}
    {\mathbf f}_L(x) = 1- e^{-\delta}e^{L^2\(\cos (x/L) -1\)},
  \end{equation*}
  and we check the pointwise convergence
  \begin{equation*}
    {\mathbf f}_L(x)\Tend L \infty 1-e^{-\delta}e^{-x^2/2}=f(x).
  \end{equation*}
\end{remark}
In the case where the forcing term is periodic in time ($P\not =0$),
we choose a profile $Q$ which may seem surprising at first sight, but
which is constructed in order to make a connection with the previous
result (see Section~\ref{sec:time-periodic} for details).
\begin{theorem}\label{theo:cubic}
  Let $d\ge 1$, $\si=1$, and $\eps=0$ in \eqref{eq:NLS}. Setting
  $f(t,x) = 2Q(x)e^{-3it}$ with
  \begin{equation*}
    Q(x) =  4 e^{- \frac{3\delta}{2} - \frac{3|x|^2}{4}}  - 3 e^{-
  \frac{\delta}{2} - \frac{|x|^2}{4}},\quad \delta>0,
  \end{equation*}
  \eqref{eq:NLS} has three solutions of the form
  $u_k(t,x)=U_k(x)e^{-3it}$ with $U_k$ real-valued, $k=0,1,2$. Two of
  these three profiles enjoy the same frequency cascade as in
  Theorem~\ref{theo:law-f} (with $\alpha=\frac{1}{2}$), say $U_1$
  and $U_2$:
  \begin{equation*}
    \F(U_k)(\xi) \approx |\xi|^{-1-d}, \quad k=1,2,
  \end{equation*}
provided that $\xi$ is restricted to a region of the form
\begin{equation*}
 0<\xi_0^2\le |\xi|^2\ll \frac{1}{\delta},
\end{equation*}
for $\xi_0>0$ arbitrary.
\end{theorem}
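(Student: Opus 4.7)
The plan is to reduce the PDE to a pointwise algebraic cubic for the spatial profile via a stationary ansatz, exploit the specific shape of $Q$ to factor this cubic explicitly, identify the two non-trivial roots as smooth Gaussian corrections to $\pm\sqrt{3}\,f^{1/2}$, and then invoke Theorem~\ref{theo:cascade-f}.

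Substituting $u_k(t,x)=U_k(x)e^{-3it}$ with $U_k$ real into \eqref{eq:NLS} (with $\si=1$, $\eps=0$) turns $i\d_t u=|u|^2u-f$ into the pointwise identity $U^3-3U=2Q$. Introducing the auxiliary Gaussian $V(x):=2e^{-\delta/2-|x|^2/4}$, a direct computation gives $V^3-3V=8e^{-3\delta/2-3|x|^2/4}-6e^{-\delta/2-|x|^2/4}=2Q$, so the cubic factors as $(U-V)(U^2+UV+V^2-3)=0$. The first factor yields the smooth profile $U_0=V$, and the quadratic produces the two real roots $U_{1,2}=\tfrac{1}{2}\bigl(-V\pm\sqrt{12-3V^2}\bigr)$, well-defined because $\delta>0$ forces $V(x)\le 2e^{-\delta/2}<2$, hence $12-3V^2>0$ everywhere.

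The essential step is to recognize the square root as $f^{1/2}$: since $V^2=4e^{-\delta-|x|^2/2}$, one has $12-3V^2=12\bigl(1-e^{-\delta}e^{-|x|^2/2}\bigr)=12f$, whence
\[
U_{1,2}(x)=-\tfrac{1}{2}V(x)\pm\sqrt{3}\,f(x)^{1/2}.
\]
This makes explicit that the singular Fourier behavior of $U_{1,2}$ is entirely inherited from $f^{1/2}$. As $f^{1/2}(x)\to 1$ and $V(x)\to 0$ at infinity, $U_{1,2}\mp\sqrt{3}=-V/2\pm\sqrt{3}(f^{1/2}-1)$ belongs to $\Sch(\R^d)$, so for $\xi\ne 0$
\[
\F(U_{1,2})(\xi)=\pm\sqrt{3}\,\F(f^{1/2}-1)(\xi)-\tfrac{1}{2}\F(V)(\xi).
\]
Theorem~\ref{theo:cascade-f} with $\alpha=1/2$ delivers $|\F(f^{1/2}-1)(\xi)|\approx|\xi|^{-1-d}$ on the target region. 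The Gaussian residual satisfies $\F(V)(\xi)=c_d e^{-\delta/2}e^{-|\xi|^2}$, and the elementary observation that $|\xi|^{1+d}e^{-|\xi|^2}$ is uniformly bounded on $\R^d$ yields $|\F(V)(\xi)|\lesssim|\xi|^{-1-d}$ everywhere; combined with the triangle inequality, this gives the upper bound in the claimed $\approx$.

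The delicate point I expect to be the main obstacle is the matching lower bound, for which one must rule out destructive cancellation between the smooth Gaussian correction $\F(V)/2$ and the cascade term $\pm\sqrt{3}\,\F(f^{1/2}-1)$. This should follow from the explicit leading-order asymptotics of $\F(f^{1/2}-1)$ obtained in the proof of Theorem~\ref{theo:cascade-f}, which pins down a definite sign for the dominant $|\xi|^{-1-d}$ contribution; the bounded Gaussian residual is then concentrated in a fixed frequency band (independent of $\delta$) where $|\xi|^{-1-d}$ is itself of order one, and the fact that $\xi_0$ is arbitrary but fixed leaves enough room for the comparison constants in $\approx$ to absorb this smooth perturbation.
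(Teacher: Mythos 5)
Your algebraic derivation is correct and more direct than the paper's. Rather than running through Cardano's formula as the paper does in Section~\ref{sec:cardano}, you observe that $V(x)=2e^{-\delta/2-|x|^2/4}$ already solves $V^3-3V=2Q$ and factor the cubic, yielding $U_0=V$ and $U_{1,2}=-\tfrac12 V\pm\sqrt3\,f^{1/2}$ with $12-3V^2=12f>0$ because $\delta>0$. This is exactly the decomposition the paper reaches through $\cos(\theta/3)=V/2$, $\sin(\theta/3)=f^{1/2}$, so both routes reduce the theorem to Theorem~\ref{theo:cascade-f} plus a Schwartz (Gaussian) remainder, and your upper bound $|\F(U_k)(\xi)|\lesssim|\xi|^{-1-d}$ is fine.

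The gap is precisely where you suspect, and your closing heuristic does not repair it: ``absorbing'' the Gaussian into the constants of $\approx$ is a valid move for an upper bound, but not for a lower bound, where a cancellation at even a single frequency cannot be hidden in a multiplicative constant. The two profiles behave differently here. Every term in the series \eqref{eq:f-alpha-1} with $\alpha=1/2$ carries a factor $(-1)^n a_n<0$, so $\F(f^{1/2}-1)(\xi)<0$ for all $\xi$, while $\F(V)>0$. For $U_2-\sqrt3=-\tfrac12 V+\sqrt3(f^{1/2}-1)$ the two Fourier contributions therefore have the \emph{same} sign, giving $|\F(U_2)(\xi)|\ge\sqrt3\,|\F(f^{1/2}-1)(\xi)|$, and the lower bound follows at once from Theorem~\ref{theo:cascade-f}. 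For $U_1+\sqrt3=-\tfrac12 V-\sqrt3(f^{1/2}-1)$ the contributions have \emph{opposite} signs and are genuinely comparable at $|\xi|=O(1)$: in $d=1$ with $\delta$ small, $\tfrac12\F(V)(0)=\sqrt2 e^{-\delta/2}$ while $\sqrt3\,|\F(f^{1/2}-1)(0)|$ is numerically around $1.2$, so $\F(U_1+\sqrt3)(0^+)<0$, whereas $\F(U_1+\sqrt3)(\xi)>0$ for $|\xi|$ large (the Gaussian is then negligible and the cascade term dominates). Hence $\F(U_1)$ changes sign at some $\delta$-independent $|\xi^*|$ of order one, and the two-sided estimate fails near $\xi^*$ whenever $\xi_0<|\xi^*|$. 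So the lower bound for $U_1$ needs a real argument that neither your sketch nor, in fact, the paper provides: the paper's proof of Theorem~\ref{theo:cubic} simply invokes Theorem~\ref{theo:cascade-f} for the $\sin(\theta/3)$ term and never checks whether the $\cos(\theta/3)$ Gaussian can interfere. You correctly isolated the weak point, but the last step of your proposal does not close it.
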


\subsection{Stability}
\label{sec:stab-intro}

Once the algebraic solution is constructed, and its frequency cascade
is analyzed, we have to go back to the nonlinear Schr\"odinger
equation \eqref{eq:NLS}. It turns out that the presence of the
parameter $\delta$ introduced above is crucial in order not to destroy
the frequency cascade, when at least the Laplacian term $\eps^2\Delta$
is present.
\smallbreak

For rather general $\si$ and $d$, we show that if $\eps>0$ is
sufficiently small (compared to $\delta>0$), then incorporating the
Laplacian term from \eqref{eq:NLS} into the algebraic equation does
not destroy the initial structure, to construct a stationary
solution. At this stage, we need a smooth
  nonlinearity, since we will consider higher order derivatives, hence
  the assumption that $\si$ is an integer in the next result.

\begin{theorem}\label{theo:constr-stat}
  Let $d\ge 1$. Suppose that
  \begin{itemize}
  \item Either $\si>0$ is an integer, $\alpha= \frac{1}{2\si+1}$, $f=:2Q$ is like in
    Theorem~\ref{theo:law-f}, $P=0$, and
    $u_0=f^{\alpha}$,  
  \item Or $P=\si=1$, $\alpha=\frac{1}{2}$, $Q$ is like in
  Theorem~\ref{theo:cubic}, and $u_0=U_1$ (see Section~\ref{sec:derivationQ} for the precise definition of $U_1$). 
  \end{itemize}
  Then there exists $K>0$ such that if  $0<\eps\ll \delta^K\ll 1$, there exists a
  real-valued solution $u_\eps= u_0+v$ to
  \begin{equation}\label{eq:NLSstat-gen}
  -\eps^2\Delta u_\eps -3Pu_\eps+ |u_\eps|^{2\si}u_\eps =2Q,
\end{equation}
such that
\begin{equation*}
  \sup_{\xi\in \R^d}\left| |\xi|^{2\alpha+d}\(\widehat u_\eps(\xi) -
    \widehat u_0(\xi)\)\right|\lesssim \frac{\eps}{\delta^K}\ll 1. 
\end{equation*}
In particular, $u_\eps$ enjoys the same frequency cascade as
$u_0$, as described in Theorems~\ref{theo:law-f} and \ref{theo:cubic},
respectively. 
\end{theorem}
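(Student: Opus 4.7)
The plan is to seek $u_\eps=u_0+v$ with $v$ real-valued and small, solving for $v$ by a contraction argument in an appropriate Fourier-weighted space. Subtracting the algebraic identity $|u_0|^{2\si}u_0-3Pu_0=2Q$ (which $u_0$ satisfies by hypothesis) from \eqref{eq:NLSstat-gen} reduces the problem to
\begin{equation*}
\mathcal{L}v \;:=\; -\eps^2\Delta v + V(x)\,v \;=\; \eps^2\Delta u_0 - N(v),
\end{equation*}
where $V(x):=(2\si+1)|u_0(x)|^{2\si}-3P$ and $N(v):=|u_0+v|^{2\si}(u_0+v)-|u_0|^{2\si}u_0-(2\si+1)|u_0|^{2\si}v$ is, for integer $\si$, a polynomial in $v$ of order at least two. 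In both cases $V$ is bounded and pointwise positive, tending at infinity to the positive constant $V_\infty=2\si+1$ (first case) or $V_\infty=3(|U_1(\infty)|^2-1)=6$ (second case, the branch $U_1$ being chosen so that this quantity is positive), while $V_0:=\inf V$ is bounded below by a positive power of $\delta$.

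I would work in the Banach algebra
\begin{equation*}
X \;:=\; \Bigl\{v\in C_0(\R^d):\ \|v\|_X:=\sup_{\xi\in\R^d}(1+|\xi|)^{2\alpha+d}\,|\widehat v(\xi)|<\infty\Bigr\},
\end{equation*}
whose algebra property is the standard weighted-convolution estimate (valid since $2\alpha+d>d$). The constant-coefficient piece $\mathcal{L}_0:=-\eps^2\Delta+V_\infty$ inverts trivially on the Fourier side with $\|\mathcal{L}_0^{-1}\|_{X\to X}\le 1/V_\infty$. For the full operator $\mathcal{L}=\mathcal{L}_0+W$ with $W:=V-V_\infty$, the Fourier transform $\widehat W$ inherits the cascade structure of Theorem~\ref{theo:cascade-f} applied to $|u_0|^{2\si}-1$, hence lies in $X$; combining this Fourier information with the pointwise coercive estimate $\|\mathcal{L}^{-1}g\|_{L^\infty}\le\|g\|_{L^\infty}/V_0$ coming from the maximum principle, and iterating between the two viewpoints, yields $\|\mathcal{L}^{-1}\|_{X\to X}\lesssim\delta^{-K_1}$ for some explicit $K_1$.

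Banach's fixed-point theorem is then applied to $T(v):=\mathcal{L}^{-1}(\eps^2\Delta u_0-N(v))$ on a small ball in $X$. The forcing is controlled by writing $\widehat{\eps^2\Delta u_0}(\xi)=-\eps^2|\xi|^2\,\widehat{u_0-\ell}(\xi)$ with $\ell:=\lim_{|x|\to\infty}u_0$; Theorem~\ref{theo:cascade-f} gives $(1+|\xi|)^{2\alpha+d}\eps^2|\xi|^2|\widehat{u_0-\ell}(\xi)|\lesssim \eps^2|\xi|^2\lesssim \eps^2/\delta$ on the cascade range $\xi_0^2\le|\xi|^2\ll\delta^{-1}$, while the Schwartz tails of $u_0-\ell$ (with constants polynomial in $\delta^{-1}$) handle the complementary range, producing $\|\eps^2\Delta u_0\|_X\lesssim\eps^2\delta^{-K_2}$. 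The algebra estimate $\|N(v)\|_X\lesssim\|v\|_X^2$ (using also that $\|u_0\|_{L^\infty}\lesssim 1$) then delivers the fixed point, with $\|v\|_X\lesssim\eps^2\delta^{-K_1-K_2}\le\eps\,\delta^{-K}$ provided $\eps\ll\delta^K$ and $K$ is taken large enough. The same plan applies in both cases; only the value of $V_\infty$ and cosmetic changes due to the cubic algebraic equation $U^3-3U=2Q$ need tracking.

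The step I expect to be the main obstacle is the $X\to X$ inversion of $\mathcal{L}$ with a loss only polynomial in $\delta^{-1}$: a naive Neumann series in $W\mathcal{L}_0^{-1}$ diverges in the regime $\delta\to 0$, since $\|W\|_{L^\infty}$ stays of order one near the origin and is not compensated by any small parameter. Navigating this requires carefully alternating the Fourier-side decay of $\widehat W$ (which captures the cascade structure) with the pointwise positivity of $V$, paying a controlled power of $\delta^{-1}$ at each step. An alternative I would keep in mind is to first construct $v$ in an $L^\infty$ or Sobolev setting, where the inversion of $\mathcal{L}$ is routine, and then read off the $X$ estimate a posteriori from the equation using the cascade bound on $\widehat W$ together with Theorem~\ref{theo:cascade-f}.
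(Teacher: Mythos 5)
Your proposal takes a genuinely different route from the paper's: you work directly in the Wiener-type space $X=\{v:\ \sup_\xi (1+|\xi|)^{2\alpha+d}|\hat v(\xi)|<\infty\}$ and aim to run a contraction there, whereas the paper constructs $v$ by Lax--Milgram and an iterative scheme in $H^k(\R^d)\cap\F H^k(\R^d)$ (Lemma~\ref{lem:LaxMilgram} and Proposition~\ref{prop:exist-v}), and only \emph{afterwards} converts the resulting $\|v\|_{H^k}+\|\<\cdot\>^k v\|_{L^2}$ bound into the pointwise Fourier-decay statement, via Sobolev embedding on the Fourier side (for $s>d/2$, $\sup_\xi |\xi|^{d+2\alpha}|\hat v|\lesssim \||\cdot|^{d+2\alpha}\hat v\|_{H^s}=\|\<\cdot\>^s\Lambda^{d+2\alpha}v\|_{L^2}$) combined with the symbol estimate $\<x\>^s|\xi|^{d+2\alpha}\lesssim \<x\>^k+\<\xi\>^k$ and pseudodifferential calculus (citing Helffer). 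Your scheme, if it closed, would be cleaner, but it does not close as written.

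The genuine gap is exactly the one you flag yourself and then wave past: you never establish $\|\mathcal{L}^{-1}\|_{X\to X}\lesssim\delta^{-K_1}$. The Neumann series in $W\mathcal{L}_0^{-1}$ fails, as you note. Your fallback — first solve in $H^k$, then ``read off the $X$ estimate a posteriori'' — also fails because of an unavoidable circularity. From $\hat v(\xi)=\big(\hat g(\xi)-(\hat W*\hat v)(\xi)\big)/(\eps^2|\xi|^2+V_\infty)$, Peetre's inequality $(1+|\xi|)^s\lesssim (1+|\xi-\eta|)^s+(1+|\eta|)^s$ bounds the convolution by $\|W\|_X\|\hat v\|_{L^1}+\|\hat W\|_{L^1}\|v\|_X$; the first term is fine ($\|\hat v\|_{L^1}\lesssim\|v\|_{H^k}$), but the second needs $\|\hat W\|_{L^1}/V_\infty$ to be small to be absorbed. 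In fact $\hat W$ has a fixed sign (the same computation that gives the cascade shows $\widehat{u_0^{2\si}-1}<0$ pointwise), so $\|\hat W\|_{L^1}=(2\pi)^{d/2}|W(0)|=(2\pi)^{d/2}(2\si+1)\big(1-(1-e^{-\delta})^{2\si/(2\si+1)}\big)\to(2\pi)^{d/2}V_\infty$ as $\delta\to 0$, hence the ratio $\|\hat W\|_{L^1}/V_\infty$ is bounded \emph{below} by a constant $\ge 1$. So the $\|v\|_X$ term on the right cannot be absorbed by the left, no matter how small $\eps,\delta$ are. This is not a technical nuisance: it reflects that $X$ is not an $L^2$-based space, so there is no coercivity/energy argument available, and the pointwise positivity of $V$ (which controls $L^2$ and $L^\infty$) does not transfer to the $X$-norm. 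The paper's choice of $H^k\cap\F H^k$ is precisely what makes the inversion routine, and the pointwise Fourier statement is then recovered by the pseudodifferential interpolation step you would still need to carry out. Your ``alternating'' suggestion is not a proof; until the $X\to X$ bound for $\mathcal{L}^{-1}$ is actually established (or the argument is reorganized along the paper's lines), the proposal is incomplete.
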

\begin{remark}[Focusing case]\label{rem:focusing}
If instead of \eqref{eq:NLS}, where the nonlinearity is defocusing, we
consider
\begin{equation}
  \label{eq:focusing}
  i\d_t u +\eps^2\Delta u = -|u|^{2\si}u+f,\quad x\in \R^d,
\end{equation}
then Theorems~\ref{theo:law-f} and \ref{theo:stab-strichartz}
(when $u_0=f^{\frac{1}{2\si+1}}$) remain unchanged, as will be clear
from the proofs. In the case where $f$ is periodic in time, $f(t,x) =
2Q(x)e^{-3iPt}$, then the only difference in Theorem~\ref{theo:cubic}
is that $P=1$ has to be replaced by $P=-1$, and then, with this
modification, the whole of Theorem~\ref{theo:stab-strichartz} remains valid.
The main difference concerns Theorem~\ref{theo:constr-stat}, since
\eqref{eq:NLSstat-gen} is replaced by (recall that $P=-1$ now, and $\si=1$)
\begin{equation}
  \label{eq:NLSstat-foc}
  -\eps^2\Delta u_\eps +3u_\eps- |u_\eps|^{2}u_\eps =-2Q.
\end{equation}
Linearizing the left hand side about $u_0$, we find the operator
$-\eps^2\Delta +3 -3 U_k^2$, where for $k=0,1,2$, $U_k$ are the
solutions provided by Theorem~\ref{theo:cubic}. Unlike what happens in
the defocusing case, this operator is elliptic provided that $U_k^2$
remains uniformly smaller than $1$. Resuming the proof of
Theorem~\ref{theo:constr-stat} then shows for $k=0,1,2$, each $U_k^2$
takes larger values than $1$, and  Theorem~\ref{theo:cubic} may not be
true in the focusing case.
\end{remark}

When the nonlinearity is smooth ($\si\in \N$) and
$L^2$-subcritical or critical (like in the statement below),
Strichartz estimates make it possible 
to prove the following dynamical stability result:

\begin{theorem}\label{theo:stab-strichartz}
  Let $d=1$ with $\si=1$ or $\si=2$, or $d=2$ with $\si=1$. Denote by
  $\u_0=f^{\frac{1}{2\si+1}}$ when $f$ is like in
  Theorem~\ref{theo:law-f}, and $\u_0=U_k(x)e^{-3it}$ ($k=2,3$)
  when $\si=1$ and $f$ is like in Theorem~\ref{theo:cubic}. Suppose
  that $u$ solves \eqref{eq:NLS} with
  \begin{equation}\label{eq:CI}
    u_{\mid t=0} = \u_{0\mid t=0}+v_0.
  \end{equation}
  If $v_0\in L^2(\R^d)$ is such that
  \begin{equation*}
    \|v_0\|_{L^2(\R^d)}\ll \eps^{d/2}, 
  \end{equation*}
  and if
  \begin{equation*}
    \eps^{2-\frac{d}{2}}\ll \delta^{\frac{d}{4}+1-\frac{1}{2\si+1}},
  \end{equation*}
  then there exist $T>0$ and a unique solution $u=\u_0+v$ to
  \eqref{eq:NLS}--\eqref{eq:CI}, such that 
  \begin{equation*}
    v\in C([0,T];L^2(\R^d))\cap
    L^{\frac{4\si+4}{d}}([0,T];L^{2\si+2}(\R^d)). 
  \end{equation*}
  In addition, there exists $C>0$ such that
  \begin{equation*}
    \|v(t)\|_{L^2(\R^d)}\le C\eps^{d/2}e^{Ct},\quad 0\le t\le T . 
  \end{equation*}
  If there exists $\iota,C>0$ independent of $\eps$ such that
  \begin{equation*}
    \|v_0\|_{L^2(\R^d)}\le C \eps^{d/2+\iota}\quad\text{and}\quad
    \eps^{2-\frac{d}{2}-\iota}\ll \delta^{\frac{d}{4}+1-\frac{1}{2\si+1}}, 
  \end{equation*}
  then there exist $\eps_0\in (0,1)$ and  $c_1$ independent of
  $\eps\in (0,\eps_0)$ such that the above conclusions remain  valid
  with $T=T^\eps=c_1\log\eps^{-1}$. 
\end{theorem}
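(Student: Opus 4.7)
The plan is a perturbative fixed-point argument based on the Duhamel formula for the perturbation $v=u-\u_0$, combined with Strichartz estimates. Substituting $u=\u_0+v$ into \eqref{eq:NLS} and using that $\u_0$ satisfies only the algebraic part (either $|\u_0|^{2\si}\u_0=f$, or the cubic equation defining $U_k$ after cancellation of the phase $e^{-3it}$ in the periodic case), one gets
\begin{equation*}
i\d_t v + \eps^2\Delta v = \(|\u_0+v|^{2\si}(\u_0+v) - |\u_0|^{2\si}\u_0\) - \eps^2\Delta \u_0,\quad v_{\mid t=0}=v_0.
\end{equation*}
The second term on the right is the residual produced by the Laplacian acting on the algebraic reference and plays the role of a forcing term driving $v$ away from zero.

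To get constants uniform in $\eps$ and $\delta$, I would rescale by $y=x/\eps$, $\tilde v(t,y)=v(t,\eps y)$, $\tilde{\u}_0(t,y)=\u_0(t,\eps y)$, turning the equation into an $\eps$-free NLS with source $-\Delta_y\tilde{\u}_0$. The two relevant smallness quantities then read
\begin{equation*}
\|\tilde v_0\|_{L^2_y}=\eps^{-d/2}\|v_0\|_{L^2_x}\ll 1,\qquad \|\Delta_y\tilde{\u}_0\|_{L^2_y}=\eps^{2-d/2}\|\Delta_x\u_0\|_{L^2_x}.
\end{equation*}
A direct computation from $\u_0=f^\alpha$, using $f(x)\approx \delta+|x|^2$ near the origin and the uniform lower bound $f\ge c\delta$, gives $\|\Delta_x\u_0\|_{L^2_x}\lesssim \delta^{\alpha-1+d/4}$ in all three allowed regimes (the integrability near $x=0$ being guaranteed by $d+4\alpha<4$). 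Combined with the hypothesis $\eps^{2-d/2}\ll\delta^{d/4+1-\alpha}$, this forces $\|\Delta_y\tilde{\u}_0\|_{L^2}\ll\delta^{d/2}\ll 1$. The periodic case is analogous, using the cubic identity $U_k^3-3U_k=2Q$ to extract the same kind of bound for $\Delta U_k$.

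Next I would apply standard Strichartz estimates for $e^{it\Delta}$ to the rescaled Duhamel formula, with an admissible pair $(q,r)$ having $r=2\si+2$. Expanding
\begin{equation*}
|\tilde{\u}_0+\tilde v|^{2\si}(\tilde{\u}_0+\tilde v) - |\tilde{\u}_0|^{2\si}\tilde{\u}_0 = \tilde{\u}_0^{2\si}\bigl((\si+1)\tilde v+\si\overline{\tilde v}\bigr) + \mathcal Q(\tilde{\u}_0,\tilde v),
\end{equation*}
the linear multiplicative piece has coefficient bounded pointwise by $\|\u_0\|_{L^\infty}^{2\si}\le 1$ (as $\u_0=f^{1/(2\si+1)}$ with $f\le 1$), and is placed in $L^1_TL^2_x$, while the higher-order remainder $\mathcal Q$ is a finite sum of monomials $\tilde{\u}_0^{2\si-k}\tilde v^{k+1}$ ($k\ge 1$) with coefficients bounded by $1$, estimated in the dual Strichartz space by H\"older. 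The outcome is
\begin{equation*}
\|\tilde v\|_{L^\infty_T L^2\cap L^q_TL^r}\le C\|\tilde v_0\|_{L^2}+CT\|\tilde v\|_{L^\infty_T L^2}+C\mathcal N(\|\tilde v\|_{L^q_TL^r})+CT\|\Delta_y\tilde{\u}_0\|_{L^2},
\end{equation*}
with $\mathcal N(x)=O(x^2)$. For $T$ of fixed size (depending only on $\si,d$), a contraction argument gives local existence, uniqueness and smallness of $\|\tilde v\|_X$; returning to the original variables and applying Gronwall on the $L^2$-growth inequality produces $\|v(t)\|_{L^2}\le C\eps^{d/2}e^{Ct}$.

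For the logarithmically long statement I would iterate the local argument on consecutive intervals of fixed length. Under the strengthened hypotheses $\|v_0\|_{L^2}\lesssim\eps^{d/2+\iota}$ and $\eps^{2-d/2-\iota}\ll\delta^{d/4+1-\alpha}$, each iterate enters with rescaled data and source of size at most $\eps^\iota$ times a power of $\delta$; choosing $T^\eps=c_1\log\eps^{-1}$ with $c_1<\iota/C$ guarantees that the Gronwall amplification $e^{CT^\eps}\lesssim \eps^{-Cc_1}$ is still absorbed by the $\eps^\iota$ margin, keeping the nonlinear term $\mathcal N(\|\tilde v\|_X)$ subdominant throughout. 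The main obstacle is exactly this bookkeeping across the iteration: one has to verify that the quadratic-or-higher contribution never catches up with the linear Gronwall growth under the competition between $\eps$, $\delta$, and $\iota$, and confirm the Laplacian estimate $\|\Delta_x\u_0\|_{L^2}\lesssim \delta^{\alpha-1+d/4}$ in both the stationary and the periodic frameworks; the periodic case requires extracting the singular behavior of $U_k$ from its defining cubic, which is slightly more delicate but structurally analogous to the direct computation on $f^\alpha$.
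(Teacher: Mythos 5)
Your proposal follows essentially the same route as the paper's proof: perturb about $\u_0$, feed the residual $-\eps^2\Delta\u_0$ together with the expanded nonlinearity through Strichartz estimates, and close by Gr\"onwall plus a bootstrap on a dispersive norm. The explicit rescaling $y=x/\eps$ is equivalent to the paper's Lemma~\ref{lem:strichartz}, which is itself obtained from the standard estimates by exactly that rescaling, so there is no real difference there. One genuine point of divergence is the source bound: you derive $\|\Delta_x\u_0\|_{L^2}\lesssim\delta^{\alpha-1+d/4}$ by a direct Taylor expansion of $f\approx\delta+|x|^2/2$ near the origin, while the paper invokes Lemma~\ref{lem:u0Hs}, which states $\delta^{\alpha-1-d/4}$; tracing the lemma's proof, the factor $(\beta n)^{-d/2}$ from \eqref{eq:f-alpha-1} appears to have been carried only as $\beta^{-d/2}$, so your $+d/4$ looks like the correct exponent. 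Either version is absorbed by the stated hypothesis $\eps^{2-d/2}\ll\delta^{d/4+1-\alpha}$, so the theorem is unaffected, but your direct computation is the more reliable of the two. A small terminological slip: $d+4\alpha<4$ is not what guarantees integrability near $x=0$ (the $L^2$ integral of $\Delta u_0$ converges for each fixed $\delta>0$); it is the regime in which the dominant contribution to $\|\Delta u_0\|_{L^2}$ is $\delta$-dependent rather than $O(1)$, so that the lower bound $f\ge c\delta$ is what controls it. For the Ehrenfest time you propose iterating a fixed-length local argument, whereas the paper runs a single bootstrap $\|v\|_{L^{q_0}(0,t;L^4)}\le\eps^{d/4+\iota}$ up to $c_1\log\eps^{-1}$; both encode the same bookkeeping, but the iteration version needs some care to ensure that the a-posteriori bound at the end of one subinterval is strong enough to re-initialize the next, which the continuous bootstrap sidesteps. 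Finally, the quintic case $d=1,\ \si=2$ hides a nontrivial choice of admissible pairs $(q_j,r_j)$ for each monomial $\u_0^{5-j}v^j$, $j=1,\dots,5$ (the paper exhibits them explicitly), which your ``H\"older in the dual Strichartz space'' gloss leaves implicit; this is where the scheme could in principle fail to close, so it is worth making concrete.
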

\begin{remark}[Potential driven cascade]\label{rem:potential} 
  If instead of a source term,
  we set (the same) $f$ as a potential, and consider
\begin{equation}\label{eq:potential}
  i\d_t u +\eps^2\Delta u = |u|^{2\si}u - fu ,\quad x\in \R^d,
\end{equation}
 then our algebraic solution is still given like in
 Theorem~\ref{theo:law-f}, with $\alpha=\frac{1}{2\si}$ instead of
 $\alpha=\frac{1}{2\si+1}$.  Up to this change for the value of
 $\alpha$ and minor modifications, Theorem~\ref{theo:constr-stat} (first case), and
 Theorem~\ref{theo:stab-strichartz} remain valid. Note that we may
 also replace $f$ with $f+\omega$ for some $\omega\in \R$: $u_0(t,x)
 :=e^{i\omega t}U(x)$ solves \eqref{eq:potential} in the case
 $\eps=0$, where $U= f^{1/(2\si)}$ exhibits a frequency cascade, and
 this solution is stable in the sense of
 Theorem~\ref{theo:stab-strichartz}. 
\end{remark}

\begin{remark}[Fluid mechanics]\label{rem:fluid}
  Developping on the previous remark, we may consider the Madelung
  transform: $\rho:=|u|^2$ and $J=\IM \(\bar u \nabla u\)$ solve the
  Euler-Korteweg (or quantum Euler) equation
  \begin{equation*}
    \left\{
      \begin{aligned}
        &\d_t \rho +2\eps^2 \nabla\cdot J=0,\\
        &\d_t J +2\eps^2 \nabla\(\frac{J\otimes J}{\rho}\) +
        \frac{\si}{\si+1}\nabla \rho^{\si+1} -\rho \nabla f =
        \frac{\eps^2}{2}\Delta\nabla \rho - 2\eps^2\nabla\cdot\( \nabla
        \sqrt\rho\otimes \nabla
        \sqrt\rho\),
      \end{aligned}
    \right.
  \end{equation*}
  see e.g. \cite{CaDaSa12}. The algebraic solution becomes $\rho_0=
  f^{1/\si}$: if $\si>1$, then we can still invoke
  Theorem~\ref{theo:law-f} to describe a frequency cascade for
  $\rho_0$. On the other hand, there is no quasi-singularity for
  $\rho_0$ when $\si=1$. 
\end{remark}

The rest of the paper is organized as follows. In Section~\ref{sec:f},
we prove Theorem~\ref{theo:law-f}. It paves the way for the proof
of Theorem~\ref{theo:cubic}, which is given in
Section~\ref{sec:time-periodic}. The stability analysis, leading to
Theorems~\ref{theo:constr-stat} and \ref{theo:stab-strichartz}, is
presented in Section~\ref{sec:stability}. Finally, numerical simulations
illustrate and complement the analysis in Section~\ref{sec:num}.
\section{Time independent forcing}
\label{sec:f}

In this section, we consider
\begin{equation*}
    f (x) = 1-e^{-\delta} e^{-\beta|x|^2/2},\quad \beta,\delta>0.
  \end{equation*}
The parameter $\delta$ is crucial in the sequel, while $\beta$ is
here mostly to show some form of generality (see also Remark~\ref{rem:other}).
  
To lighten notations, we introduce $\alpha \in (0,1)$, and consider the
function $f^\alpha-1$. Of course, $\alpha$ has to be thought of as
$\alpha=\tfrac{1}{2\si+1}$. The function $f^\alpha-1$ is given by a
rather explicit power series. Indeed, for $|X|<1$, 
\begin{equation*}
 (1-X)^{\alpha}-1 = \sum_{n\ge 1} a_n (-X)^n,
\end{equation*}
where the formula for the $a_n$'s is standard,
\begin{equation*}
  a_n =
  \frac{\alpha(\alpha-1)\dots(\alpha-n+1)}{n!}=
  \frac{\Gamma(\alpha+1)}{\Gamma(n+1) 
    \Gamma(\alpha-n+1)}. 
\end{equation*}
Euler formula reads, for $z\in \C\setminus \Z$,
\begin{equation*}
  \Gamma(1-z)\Gamma(z) = \frac{\pi}{\sin(\pi z)},
\end{equation*}
so we can rewrite ($z= \alpha-n+1\not\in \Z$ indeed)
\begin{equation*}
  a_n =
  \frac{\Gamma(\alpha+1)}{\Gamma(n+1)}
  \frac{\sin\(\pi(\alpha-n+1)\)}{\pi}
  \Gamma(n-\alpha)= (-1)^{n-1} \frac{\sin\(\pi\alpha\)}{\pi}
 \frac{\Gamma(\alpha+1)\Gamma(n-\alpha)}{\Gamma(n+1)} .
\end{equation*}
Stirling formula yields
\begin{equation*}
  \Gamma(z)\Eq z \infty \sqrt{2\pi} z^{z-\frac{1}{2}}e^{-z}.
\end{equation*}
Therefore
\begin{equation*}
   \frac{\Gamma(n-\alpha)}{\Gamma(n+1)}\Eq n \infty \frac{c}{n^{\alpha+1}},
 \end{equation*}
 for some $c>0$ whose exact value can be computed, but which is left
 out here.
 Since we always have $\alpha>0$, the series is normally
 converging, and
 \begin{equation}\label{eq:f-alpha-1}
   \begin{aligned}
     \mathcal{F}( f^\alpha-1)(\xi)
     &= \sum_{n\ge 1} (-1)^n a_n \F\( e^{-n\delta}
       e^{-n\beta|x|^2/2}\)(\xi) \\
     &=\sum_{n\ge 1} (-1)^n a_n\frac{1}{(\beta n)^{d/2}}e^{-n\delta}
   e^{-\frac{|\xi|^2}{2n\beta}}.
\end{aligned}
\end{equation}
In view of the above computations,
\begin{equation*}
 (-1)^n a_n\Eq n \infty - \frac{\sin\(\pi\alpha\)}{\pi}
 \Gamma(\alpha+1) \frac{c}{n^{\alpha+1}}= \frac{c_0}{n^{\alpha+1}},
\end{equation*}
for some $c_0<0$ whose precise value is irrelevant for our purpose. 
\smallbreak

The sum on $n$ is then compared to the continuous counterpart via an
integral. This makes sense when the integrated function in
monotonic. In the present case, and after replacing $a_n$ with its
leading order asymptotic behavior, the function we consider is
\begin{equation*}
  g(y) = \frac{1}{y^{\alpha+1+d/2}}
    e^{-y\frac{\delta}{\beta}}e^{-\frac{|\xi|^2}{2y}},
  \end{equation*}
where $y$ is a placeholder for $n\beta$.
Indeed, there exist $c_0,C_0>0$ such that 
\begin{equation*}
  \frac{c_0}{n^{\alpha+1}}\le (-1)^{n}a_n\le
  \frac{C_0}{n^{\alpha+1}},\quad \forall n\ge 1.
\end{equation*}

Let
\begin{equation*}
  k= \alpha+1+d/2,\quad \eta = \frac{\delta}{\beta},\quad \gamma =
  \frac{|\xi|^2}{2}.
\end{equation*}
Then
\begin{equation*}
  g(y) = \frac{1}{y^k}e^{-\eta y-\frac{\gamma}{y}},\quad g'(y) =
  \frac{1}{y^k}e^{-\eta y-\frac{\gamma}{y}}\( -\frac{k}{y}-\eta+\frac{\gamma}{y^2}\).
\end{equation*}
The sign of $g'$ is given by the second order polynomial in $1/y$,
\begin{equation*}
  P(X) = \gamma X^2 -kX-\eta.
\end{equation*}
The function $g$ is nonincreasing for
\begin{equation*}
  \frac{1}{y}= X<\frac{k+\sqrt \Delta}{2\gamma},\quad \Delta = k^2+4\eta\gamma.
\end{equation*}
In the range $\frac{\delta}{\beta}|\xi|^2\ll 1$, $\Delta\approx k^2$, and the above
condition reduces to
\begin{equation*}
  y>\frac{\gamma}{k} = \frac{|\xi|^2}{2\alpha+2+d}=:y_0.
\end{equation*}
Therefore, $g$ is increasing on $(0,y_0)$, decreasing on
$(y_0,\infty)$. Since for all $a>0$, the map $z\mapsto  z^a e^{-z}$ is bounded on
$[0,\infty)$, the map
\begin{equation*}
  y\mapsto \(\frac{\gamma}{y}\)^a e^{-\frac{\gamma}{y}}
\end{equation*}
is bounded on $(0,\infty)$, independently of $\gamma>0$. Writing
\begin{equation*}
  g'(y) = -\frac{\eta}{\gamma^k}\(\frac{\gamma}{y}\)^ke^{-\eta
    y-\frac{\gamma}{y}}
  -\frac{k}{\gamma^{k+1}}\(\frac{\gamma}{y}\)^{k+1}e^{-\eta
    y-\frac{\gamma}{y}} + \frac{1}{\gamma^{k+1}}\(\frac{\gamma}{y}\)^{k+2}e^{-\eta
    y-\frac{\gamma}{y}},
\end{equation*}
we get a dependence of bound upon $\xi$ of the form
\begin{equation*}
  |g'(y)|\lesssim \frac{1}{|\xi|^{2k+2}} + \frac{1}{|\xi|^{2}}.
\end{equation*}
In passing, this shows that $g$ is integrable near the origin. 
So if we assume that $|\xi|\ge \xi_0>0$, $|g'|$ is controlled on
$(0,\infty)$ by a constant which depends on $\xi_0, \alpha$ and $d$
(only). 
Therefore,
\begin{itemize}
\item As $g'>0$ on $(0,y_0)$, there exists $c_1,C_1$ (independent of
  $|\xi|^2\ge \xi_0^2$)
  such that
  \begin{align*}
    c_1\int_0^{y_0}\frac{1}{y^{\alpha+1+d/2}} e^{- y\frac{\delta}{\beta}}
    e^{ -  \frac{|\xi|^2}{2y}}\dd y
    &\le \sum_{n\beta \le y_0}
     \frac{(-1)^{n-1} }{\(\beta n\)^{d/2}}a_n e^{-
      n \delta} e^{ - \frac{|\xi|^2}{2n \beta}}\\
    &\le
     C_1\int_0^{y_0}\frac{1}{y^{\alpha+1+d/2}} e^{- y\frac{\delta}{\beta}}
    e^{ -  \frac{|\xi|^2}{2y}}\dd y,
  \end{align*}
 \item As $g'<0$ on $(y_0,\infty)$, there exists $c_2,C_2$ (independent of
  $|\xi|^2$ since $\|g'\|_{L^\infty}$ is independent of $|\xi|^2\ge \xi_0^2$)
  such that
   \begin{align*}
    c_2\int_{y_0}^\infty\frac{1}{y^{\alpha+1+d/2}} e^{- y\frac{\delta}{\beta}}
    e^{ -  \frac{|\xi|^2}{2y}}\dd y
    &\le \sum_{n\beta > y_0}
     \frac{(-1)^{n-1} }{\(\beta n\)^{d/2}}a_n e^{-
      n \delta} e^{ - \frac{|\xi|^2}{2n \beta}} \\
     &\le
     C_2\int_{y_0}^\infty\frac{1}{y^{\alpha+1+d/2}} e^{- y\frac{\delta}{\beta}}
    e^{ -  \frac{|\xi|^2}{2y}}\dd y.
  \end{align*}
\end{itemize}
Now we set $c_3=\min(c_1,c_2)$ and $C_3=\max(C_1,C_2)$, so we have
   \begin{align*}
    c_3\int_{0}^\infty\frac{1}{y^{\alpha+1+d/2}} e^{- y\frac{\delta}{\beta}}
    e^{ -  \frac{|\xi|^2}{2y}}\dd y
   & \le \sum_{n\ge 1}
     \frac{(-1)^{n-1}}{\(\beta n\)^{d/2}} a_n e^{-
     n \delta} e^{ - \frac{|\xi|^2}{2n \beta}} \\
     &\le
     C_3\int_0^\infty\frac{1}{y^{\alpha+1+d/2}} e^{- y\frac{\delta}{\beta}}
    e^{ -  \frac{|\xi|^2}{2y}}\dd y.
  \end{align*}
So we can take $\xi_0>0$ arbitrary, and under the constraint
\begin{equation*}
  |\xi|^2\ll \frac{\beta}{\delta},
\end{equation*}
we can write
\begin{equation*}
  \int_0^\infty\frac{1}{y^{\alpha+1+d/2}} e^{- y\frac{\delta}{\beta}}
    e^{ -  \frac{|\xi|^2}{2y}}\dd y\approx c\beta^{\alpha}|\xi|^{-2\alpha-d} \int_0^\infty
      z^{\alpha-1+d/2}e^{-z/2}\dd z,
    \end{equation*}
 thus proving the explicit frequency power law, and
 Theorem~\ref{theo:law-f}.

\begin{remark}[Other power for the singularity]\label{rem:other}
  To model a cancellation for $f$ of the form $|x|^{2k}$, with $k>0$
  (integer or not),
  we may consider
  \begin{equation*}
    \(1 - e^{-\delta}e^{ - \beta |x|^2/2}\)^k.
  \end{equation*}
  We can then proceed similarly, by
  simply replacing $\frac{1}{2\si+1}$ by $\frac{k}{2\si+1}$ for the
  value $\alpha$. The case $k<2\si+1$ requires no modification. If
  $k>2\si+1$, and if $\frac{k}{2\si+1}$ is not an integer, we write
  \begin{equation*}
    \frac{k}{2\si+1}= n+\alpha,\quad \alpha\in (0,1), 
  \end{equation*}
and 
\begin{equation*}
    \(1 - e^{-\delta}e^{ - \beta |x|^2/2}\)^k= \(1 - e^{-\delta}e^{ -
      \beta |x|^2/2}\)^n
  \(1 - e^{-\delta}e^{ - \beta |x|^2/2}\)^\alpha.
  \end{equation*}
The first factor on the right hand side is smooth, and the second is
treated like below: only the second factor is responsible for a
quasi-singularity. 
\end{remark}
\section{Time periodic forcing}
\label{sec:time-periodic}

\subsection{Cardano formula}
\label{sec:cardano}

We now assume that 
\begin{equation*}
  f=f(t,x)= 2Q(x) e^{-3iPt}\quad \text{and}\quad \si=1,
\end{equation*}
where no assumption is made so far on the sign of
  $P$. It turns out that the analysis below yields a result compatible
  with the previous  approach ($P=0$) ony for $P>0$.
We somehow proceed in a reversed order compared to the previous
section: we seek an algebraic solution to \eqref{eq:NLS}, and impose
that the profile has a similar behavior as in the previous
section. This will 
lead to the definition of $Q$.

The first step consists in writing the solution $u$ to \eqref{eq:NLS}
under the (approximate) form $u(t,x) = e^{-3iPt}U(x)$, and removing
the Laplacian (we formally let $\eps \to 0$). If $Q$ is real,
then so is $U$, which therefore solves
\begin{equation}\label{eq:algebre}
  U^3 - 3P U - 2Q = 0. 
\end{equation}
The solutions are given by Cardano formula, we briefly recall the
corresponding method.
We plug $U = \alpha + \beta \in \R$ into \eqref{eq:algebre}, and we
find 
\begin{equation*}
  \alpha^3 + \beta^3 + (3 \alpha \beta - 3P)( \alpha + \beta) - 2Q = 0.
\end{equation*}
We impose
\begin{equation*}
\left|
\begin{array}{l}
\alpha \beta = P \quad \Leftrightarrow \quad \alpha^3 \beta^3 = P^3\\
\alpha^3 + \beta^3 = 2Q
\end{array}
\right.
\end{equation*}
and this yields that $\alpha^3$ and $\beta^3$ are the roots of 
\begin{equation*}
  X^2 - 2Q X + P^3=0\quad \Leftrightarrow \quad X = Q \pm \sqrt{Q^2 - P^3}.
\end{equation*}
We discuss this formula according to the sign of the discriminant $Q^2
- P^3$, which depends on the value of $Q(x)$. 

\begin{itemize}
\item $Q^2 > P^3$. 
Note that
  necessarily $\alpha \in \R$ and $\beta \in \R$. In addition,
    $\alpha^3+\beta^3 = 2Q$ and $\alpha+\beta=U$ have the same sign,
    since 
    \begin{equation*}
      \alpha^3+\beta^3 = (\alpha+\beta)\underbrace{(\alpha^2-\alpha
        \beta+\beta^2)}_{\ge 0}.
    \end{equation*}
  \item $Q^2 = P^3$. Then we have $\alpha^3 = \beta^3 = Q$. If we take
  $\alpha = j^k Q^\frac13$ where $Q^\frac13$ denote the real cubic root of
  the real function $Q$, $j = e^{\frac{2i\pi}{3}}$, $k = 0,1,2$, then 
\begin{itemize}
\item If $k = 0$ then $U = 2Q^{\frac13}$ has the same sign as $Q$.  
\item If $k = 1$ or $k = 2$ (the same case by noticing $\bar j = j^2$) then $U = 2Q^{\frac13} \cos(\frac{2\pi}{3}) = - Q^{\frac13}$. We obtain the solution 
\begin{equation*}
Q^2 = P^3,\quad
U = - Q^{\frac13} .
\end{equation*}
If $P \neq 0$ is constant, this
solution cannot exhibit 
singularities as we have necessarily $P>0$,  $Q(x)= P^\frac{3}{2}$, and thus
$U\sim -\sqrt P$ is smooth near the point $x$ considered.
\end{itemize}
\item $Q^2 < P^3$. Then as $P> 0$ is constant we can write 
  \begin{equation*}
    Q = P^\frac{3}{2} R, \quad \mbox{with}\quad R < 1. 
  \end{equation*}
Then we can take 
\begin{equation*}
\alpha^3 = Q + i\sqrt{P^3 - Q^2} = P^{\frac32} \( R + i \sqrt{1 - R^2}\), \quad 
\mbox{and}\quad \beta = \bar \alpha,
\end{equation*}
with the condition $R^2 \le 1$ and thus $R = \cos(\theta(x))$ for some function 
\begin{equation*}
  \theta(x) = \arccos( R) \in [0,\pi]. 
\end{equation*}
Let us remark that $\theta$ can have jumps in the derivative, but is at least continuous if $R$ is smooth. 

\begin{remark}
At this level, we could also define another lifting as $\theta(x) + 2k(x) \pi$ is also solution, with $k(x) \in \Z$. Then we would have jump in the derivative, but these jumps could be compensated, see below.  
\end{remark}
Note that $\alpha \beta = |\alpha|^2 = P$, and 
and hence $|\alpha| = P^{\frac12}$. 
Then we have 
\begin{equation*}
  \alpha^3 =  P^{\frac32}( \cos \theta + i |\sin \theta|) = P^{\frac32} e^{i \theta},
\end{equation*}
as  $\theta \in [0,\pi]$. Note that the limiting cases $\theta = 0$
and $\theta = \pi$ correspond to the previous case $Q^2 = P^3$. Then
we can take with $k(x) \in \{0,1,2\}$ depending possibly on $x$, 
\begin{equation*}
  \alpha = P^{\frac12} e^{i \frac{\theta(x)}{3} + i\frac{2k(x)\pi}{3}}
  = P^{\frac12} e^{i \phi(x)}. 
\end{equation*}
\end{itemize}
In the sequel, we assume $P=1$ to lighten notations,
  given we have already assumed $P>0$.
\subsection{Derivation of the forcing term}
\label{sec:derivationQ}

We make the above formal discussion more precise, in order to derive
some explicit formula for $Q$. 
In the case where $P=1$ and the discriminant $Q^2-P^3<0$, we have
found that 
\begin{equation*}
  \alpha^3 = Q+i\sqrt{1-Q^2},\quad\beta=\bar\alpha. 
\end{equation*}
Let $j=e^{2i\pi/3}$. We have three solutions, and all of them are
real-valued,
\begin{equation*}
  U_k=j^k\(Q+i\sqrt{1-Q^2}\)^{1/3} + j^{-k} \(Q-i\sqrt{1-Q^2}\)^{1/3}
  , \quad k=0,1,2,
\end{equation*}
where the cubic root is the same for $k=0,1$ and $2$, noticing that
the imaginary part of $Q+i\sqrt{1-Q^2}$ is never zero, provided that
$Q^2<1$ everywhere. More precisely, for $Q(x)\in (0,e^{-\eta}]$ for all $x\in
\R^d$, we can write $Q(x) = \cos\theta(x)$ for some $\theta(x)\in
[\arccos e^{-\eta},\frac{\pi}{2})$. We have
\begin{align*}
  U_k(x) &=  j^k
  e^{i\theta(x)/3} + j^{-k}e^{-i\theta(x)/3}=2\cos\(
           \frac{\theta(x)}{3}+\frac{2k\pi}{3}\)\\
& = 2 \cos\( \frac{\theta(x)}{3}\) \cos\( \frac{2k\pi}{3}\) - 2\sin\(
  \frac{\theta(x)}{3}\) \sin\( \frac{2k\pi}{3}\), 
\end{align*}
and thus
\begin{align*}
&U_0(x) = 2 \cos\( \frac{\theta(x)}{3}\) ,\\
&U_1(x) = - \cos\( \frac{\theta(x)}{3}\) - \sqrt{3}\sin\( \frac{\theta(x)}{3}\),\\
&U_2(x) = - \cos\( \frac{\theta(x)}{3}\) + \sqrt{3}\sin\(
  \frac{\theta(x)}{3}\). 
\end{align*}
To fall back into the situation of the case $P=0$, we assume that
\begin{equation*}
  \sin\( \frac{\theta(x)}{3}\) = \(1 - e^{- \delta -
    \frac{|x|^2}{2}}\)^{\frac12}, 
\quad \cos\( \frac{\theta(x)}{3}\) = e^{- \frac{\delta}{2} -
  \frac{|x|^2}{4}}. 
\end{equation*}
For $k = 1$ or $2$, the sine term is a special case of
Theorem~\ref{theo:law-f}. Classical trigonometric formulas
(special case of 
Chebyshev polynomial) yield
\begin{align*}
Q(x) &= \cos( \theta(x)) = 4 \cos\( \frac{\theta(x)}{3}\)^3 - 3 \cos\( \frac{\theta(x)}{3}\) \\
&=  4 e^{- \frac{3\delta}{2} - \frac{3|x|^2}{4}}  - 3 e^{-
  \frac{\delta}{2} - \frac{|x|^2}{4}}. 
\end{align*}
Together with Theorem~\ref{theo:law-f}, this yields
Theorem~\ref{theo:cubic}.

\section{Stability}
\label{sec:stability}

The starting point in this section is simply the purely algebraic
solution considered in Section~\ref{sec:f} or
Section~\ref{sec:time-periodic}, 
$U$ or $U_k$, $k=1,2$.
Denote by $\u_0=u_0=U(x)$ the solution constructed in Section~\ref{sec:f},
and $\u_0=u_0 (x)e^{-3it}= U_k(x)e^{-3it}$ ($k=1,2$) the solution
constructed in 
Section~\ref{sec:time-periodic} when $\si=1$, that
  is, like in Theorem~\ref{theo:constr-stat}. We set
$\alpha=\frac{1}{2\si+1}$ when $f$ is stationary (like in
Section~\ref{sec:f}), and $\alpha=\frac{1}{2}$  when $f=2Q(x)e^{-3it}$
and the nonlinearity is cubic (like in
Section~\ref{sec:time-periodic}). 
\smallbreak

We start our analysis
with rather straightforward estimates for $u_0-1$:
\begin{lemma}\label{lem:u0Hs}
  Let $s\ge 0$. We have
  \begin{equation*}
     \| u_0-1\|_{\dot H^s} \lesssim 
     \begin{cases}
       1&\text{ if } \alpha >s/2-d/4,\\
\log\frac{1}{\delta} &\text{ if }\alpha = s/2-d/4,\\
\delta^{\alpha-s/2+d/4}&\text{ if }\alpha<s/2-d/4,
     \end{cases}
   \end{equation*}
   and
   \begin{equation*}
     \|\<\cdot\>^s\Delta u_0\|_{L^2}\lesssim
     \delta^{\alpha-1-d/4},\quad\text{where }\<x\>=\sqrt{1+|x|^2}.
   \end{equation*}
\end{lemma}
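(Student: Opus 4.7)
The plan is to separate Lemma~\ref{lem:u0Hs} into its two assertions and handle them by distinct techniques: the $\dot H^s$ bound by Fourier analysis, and the weighted Laplacian bound by a direct physical-space calculation. For the first claim, Plancherel gives
\[
\|u_0-1\|_{\dot H^s}^2 = \int_{\R^d}|\xi|^{2s}\,|\widehat{u_0-1}(\xi)|^2\,d\xi,
\]
and the required pointwise control on $\widehat{u_0-1}$ is essentially available, up to one complement, from the proof of Theorem~\ref{theo:cascade-f}: that proof gives $|\widehat{u_0-1}(\xi)| \lesssim |\xi|^{-2\alpha-d}$ in the cascade range $\xi_0 \le |\xi|^2 \le c/\delta$, while near $\xi=0$ the Fourier transform is uniformly bounded since $u_0-1\in L^1(\R^d)$ with norm uniform in $\delta$. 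I would add the complementary fact that, past the transition $|\xi|^2 \gtrsim 1/\delta$, a Laplace-type analysis of the integral representation $\int_0^\infty y^{-\alpha-1-d/2}e^{-y\delta-|\xi|^2/(2y)}\,dy$, whose integrand is maximized near $y_\star \sim |\xi|/\sqrt{\delta}$, yields exponential decay of order $e^{-c|\xi|\sqrt{\delta}}$, so the high-frequency region contributes only $O(1)$. Everything then reduces to a radial integral
\[
\int_{\xi_0}^{1/\sqrt{\delta}} r^{2s-4\alpha-d-1}\,dr,
\]
whose behavior as $\delta\to 0$ is determined by the sign of the exponent and produces the stated trichotomy, with threshold at $\alpha = s/2+d/4$.

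For the second claim I would work directly in physical space, using the chain rule
\[
\Delta u_0 = \alpha(\alpha-1)\,f^{\alpha-2}\,|\nabla f|^2 + \alpha\, f^{\alpha-1}\,\Delta f,
\]
with $\nabla f(x) = x\,e^{-\delta-|x|^2/2}$ and $\Delta f(x) = (|x|^2-d)\,e^{-\delta-|x|^2/2}$. The Gaussian prefactors make the integrand of $\int \langle x\rangle^{2s} |\Delta u_0|^2\,dx$ essentially supported on a fixed bounded neighborhood of the origin, on which the weight $\langle x\rangle^s$ is uniformly bounded in $\delta$, so the estimate reduces to bounding the unweighted $L^2$ norm on that neighborhood. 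There the elementary lower bound $f(x)\gtrsim\delta+|x|^2$ controls $f^{\alpha-2}|\nabla f|^2$ and $f^{\alpha-1}\Delta f$ by $(\delta+|x|^2)^{\alpha-2}|x|^2$ and $(\delta+|x|^2)^{\alpha-1}$ respectively, and the rescaling $x=\sqrt{\delta}\,t$ extracts the claimed power of $\delta$.

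The main technical obstacle is the exponential decay of $\widehat{u_0-1}$ past the transition $|\xi|^2 \sim 1/\delta$, which lies outside the range treated in Theorem~\ref{theo:cascade-f} and requires the Laplace-type analysis of the integral representation; without this step one could not discard the high-frequency region in the Sobolev integral and the trichotomy would be lost. A subsidiary point is the careful bookkeeping of exponents across the three cases, to match exactly the thresholds and $\delta$-powers appearing in the statement.
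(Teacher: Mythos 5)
Your overall plan — Plancherel with the pointwise cascade bound on $\widehat{u_0-1}$ plus a Laplace-method tail past $|\xi|^2\sim1/\delta$ for the first claim, and a direct physical-space computation via the chain rule for the second — is a genuinely different route from the paper's, which never passes through a pointwise Fourier estimate: the paper instead applies the triangle (Minkowski) inequality directly to the series representation \eqref{eq:f-alpha-1}, bounding $\|u_0-1\|_{\dot H^s}$ by $\sum_n e^{-n\delta}n^{-(1+\alpha-s/2-d/4)}$ and comparing the sum to an integral. Your route is in principle viable, and would in fact yield a sharper estimate. That, however, is precisely where your arithmetic goes astray.

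The radial integral $\int_{\xi_0}^{1/\sqrt\delta} r^{2s-4\alpha-d-1}\,dr$ that you write down has its threshold at $2s-4\alpha-d=0$, i.e.\ $\alpha=s/2-d/4$, \emph{not} $\alpha=s/2+d/4$; and when it diverges it produces $\delta^{-(s-2\alpha-d/2)}$, hence $\|u_0-1\|_{\dot H^s}\lesssim\delta^{\alpha-s/2+d/4}$, \emph{not} $\delta^{\alpha-s/2-d/4}$. The discrepancy, a factor $\delta^{d/2}$ in the norm, is exactly the loss one incurs by applying Minkowski's inequality to a sum of Gaussians with overlapping supports, which is what the paper does. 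Your bound is stronger — $\delta^{\alpha-s/2+d/4}\le\delta^{\alpha-s/2-d/4}$ for $\delta<1$, and your threshold $s/2-d/4$ lies below the paper's $s/2+d/4$ — so the lemma's $\lesssim$ does follow, but not by ``producing the stated trichotomy'': you must notice that you are proving something sharper, not claim to recover the exponent $\alpha-s/2-d/4$. The same issue repeats in your physical-space treatment of $\|\langle\cdot\rangle^s\Delta u_0\|_{L^2}$: the rescaling $x=\sqrt\delta\,t$ on $\int(\delta+|x|^2)^{2(\alpha-1)}\,dx$ extracts $\delta^{\alpha-1+d/4}$ (when $\alpha<1-d/4$), again not the stated $\delta^{\alpha-1-d/4}$.

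A second, smaller inaccuracy: the high-frequency tail is \emph{not} $O(1)$ in general. Carrying out the Laplace expansion at $y_\star=|\xi|/\sqrt{2\delta}$ gives $|\widehat{u_0-1}(\xi)|\sim\delta^{\alpha/2+d/4-1/4}|\xi|^{-\alpha-d/2-1/2}e^{-c|\xi|\sqrt\delta}$ for $|\xi|\sqrt\delta\gg1$, and plugging this into the Plancherel integral over $|\xi|>\delta^{-1/2}$ yields a contribution $\sim\delta^{2\alpha+d/2-s}$ to the squared norm — the \emph{same} order as the cascade range, which can diverge as $\delta\to0$ when $\alpha<s/2-d/4$. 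Fortunately it does not dominate, so the conclusion survives, but the justification you give (``only $O(1)$'') is wrong as stated and would need to be corrected before the argument could be considered complete. In short: the strategy is sound and would prove a sharper version of the lemma, but the exponent bookkeeping you present is not internally consistent, and you should explicitly check the threshold $s/2-d/4$ and exponent $\alpha-s/2+d/4$ dominate the claimed ones rather than assert you match the statement.
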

\begin{proof}
Recalling \eqref{eq:f-alpha-1}, we write
\begin{equation*}
  \| u_0-1\|_{\dot H^s} \lesssim \frac{1}{\beta^{d/2}}\sum_{n\ge 1}
  \frac{e^{-n\delta}}{n^{1+\alpha+d/2}} \left\| |\xi|^s
      e^{-\frac{|\xi|^2}{2n\beta}}\right\|_{L^2(\R^d)} \lesssim
  \beta^{s/2-d/4} \sum_{n\ge 1} \frac{e^{-n\delta}}{n^{1+\alpha-s/2+d/4}}.
\end{equation*}
As $e^{-n\delta}\le 1$, the conclusion is straightforward when
$\alpha>s/2-d/4$. Otherwise, we split the sum into two, $n\le
1/\delta$ and $n>\delta$. For $n\le 
\delta$, the role of the exponential is negligible, and we write
\begin{equation*}
  \sum_{n\le 1/\delta} \frac{e^{-n\delta}}{n^{1+\alpha-s/2+d/4}}
  \le \sum_{n\le 1/\delta} \frac{1}{n^{1+\alpha-s/2+d/4}}\Eq \delta 0
  \int_1^{1/\delta} \frac{\dd y}{y^{1+\alpha-s/2+d/4}}\Eq \delta 0
  \delta^{\alpha-s/2+d/4}, 
\end{equation*}
with the obvious modification in the case $\alpha=s/2-d/4$. The sum
for $n>1/\delta$ can be viewed as 
a Riemann sum,
\begin{align*}
  \sum_{n> 1/\delta} \frac{e^{-n\delta}}{n^{1+\alpha-s/2+d/4}}
  &=
 \delta^{\alpha-s/2+d/4} \times \delta \sum_{n\delta> 1}
    \frac{e^{-n\delta}}{(\delta n)^{1+\alpha-s/2+d/4}}\\
  &\Eq \delta 0
 \delta^{\alpha-s/2+d/4}\int_1^\infty
 \frac{e^{-y}}{y^{1+\alpha-s/2+d/4}}\dd y. 
\end{align*}
The general Sobolev estimates follow. To estimate the momenta of
$\Delta u_0$, we write
\begin{equation*}
  \F\(\Delta u_0\) = \sum_{n\ge 1} (-1)^n a_n\frac{1}{(\beta
    n)^{d/2}}|\xi|^2 e^{-n\delta}e^{-\frac{|\xi|^2}{2n\beta}},
\end{equation*}
so
\begin{equation*}
    \|\<\cdot\>^s\Delta u_0\|_{L^2}\lesssim \sum_{n\ge 1} \frac{|a_n|}{
    n^{d/2}}e^{-n\delta}\left\| |\xi|^2
    e^{-\frac{|\xi|^2}{2n\beta}}\right\|_{H^s}.
\end{equation*}
In view of the estimate (the largest contribution being the $L^2$-norm),
\begin{equation*}
  \left\| |\xi|^2
    e^{-\frac{|\xi|^2}{2n\beta}}\right\|_{H^s}\lesssim n^{1+d/4},
\end{equation*}
we can  resume the above estimate (setting $s=2$ there), and the lemma follows.
\end{proof}

\subsection{Construction of a stationary profile}
\label{sec:constr-stat}
\subsubsection*{Time independent forcing}
We first consider the case where $f$ does not depend on time, like in Section~\ref{sec:f}, and linearize  
\eqref{eq:NLSstat-gen} about the algebraic solution
$f^{\frac{1}{2\si+1}}$. Again, we write
 $u_\eps = u_0 + v$, where $u_0=f^\alpha=f^{\frac{1}{2\si+1}}$,  with now
 the condition $v\in H^1(\R)$ independent of time.  We obtain
\begin{equation*}
  -\eps^2\Delta v +(\si+1)|u_0|^{2\si}v + \si
  u_0^{\si+1}\bar u_0^{\si-1} \bar v = \eps^2\Delta
  u_0+\O\(|v|^2\). 
\end{equation*}
Note that $u_0$ is real-valued, and so is $v$, hence
\begin{equation*}
   -\eps^2\Delta v +(2\si+1)u_0^{2\si}v = -\eps^2\Delta
  u_0+\O\(v^2\). 
\end{equation*}
As $f(x) \ge 1-e^{-\delta}$ and
$0<\alpha<1$, we have\footnote{The inequality $x^\alpha-y^\alpha\le
  (x-y)^\alpha$ for all 
  $0\le y\le x$ is standard and easy to prove.}  $f^\alpha\ge
1-e^{-\alpha\delta}\ge \alpha \delta$, along with $1\ge u_0\ge
\alpha\delta$. 
Lemma~\ref{lem:u0Hs} yields the bound
\begin{equation}\label{eq:Delta-u0-Hs}
   \|\eps^2\Delta u_0\|_{\dot H^s} \lesssim
  \eps^2\beta^{1+s/2-d/4} \delta^{\alpha-1-s/2-d/4}.
\end{equation}
The operator $-\eps^2\Delta + (2\si+1)u_0^{2\si}$ is elliptic,
uniformly in $\eps$,
\begin{equation}
\label{ellipunif}
  \< -\eps^2\Delta  v+ (2\si+1)u_0^{2\si}v,v\> \ge
  (2\si+1)(\alpha\delta)^{\frac{2\si}{2\si +1}}\|v\|_{L^2}^2. 
\end{equation}
We note that for $s>0$, $\F H^s(\R^d)$ is the weighted $L^2$-space
\begin{equation*}
  \F H^s(\R^d)=\left\{\psi\in L^2(\R^d),\ \int_{\R^d}\(1+|x|^2\)^s
  |\psi(x)|^2\dd x<\infty\right\}. 
\end{equation*}
\begin{lemma}\label{lem:LaxMilgram}
Let $\sigma, \beta>0$ and $k\in \N$ be given. There exist a constant
$C$ and $a = a(k,d,\sigma) >0$ such that the following holds.
\begin{itemize}
\item Let $g \in H^k(\R^d)$. Then for all $\eps \in (0,1]$, $\delta>0$ and $u_0$ as
above, there exists a unique solution $v\in H^k(\R^d)$ to the equation
\begin{equation}\label{eq:lin-source}
  -\eps^2\Delta v  + (2\si+1)u_0^{2\si} v = g,
\end{equation}
and moreover, we have the estimate
\begin{equation*}
  \|v\|_{H^k} \le\frac{C}{\delta^{a}} \| g \|_{H^k}. 
\end{equation*}
\item If in addition $g \in \F H^k(\R^d)$, then  $v\in \F H^k(\R^d)$ and
\begin{equation*}
  \|\<\cdot\>^kv\|_{L^2} \le\frac{C}{\delta^{a}} \| \<\cdot\>^kg \|_{L^2}. 
\end{equation*}
\end{itemize}
\end{lemma}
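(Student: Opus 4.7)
The plan is a standard Lax--Milgram argument followed by two induction arguments on $k$, exploiting the uniform coercivity of the zero-order term recalled in \eqref{ellipunif}. Since $u_0 = f^\alpha$ is real-valued with $\alpha\delta \le u_0 \le 1$ and $f \ge 1 - e^{-\delta} \gtrsim \delta$, each derivative of $u_0^{2\si} = f^{2\si\alpha}$ costs at most a factor $\delta^{-1}$ in $L^\infty$, a fact that will control every commutator that arises. For existence, uniqueness and the $k=0$ estimate, I would apply Lax--Milgram on $H^1(\R^d)$ to the continuous, symmetric bilinear form
\begin{equation*}
  a(v,w) = \eps^2 \int_{\R^d} \nabla v \cdot \nabla w\,\dd x + (2\si+1) \int_{\R^d} u_0^{2\si} v w\,\dd x,
\end{equation*}
which is coercive on $H^1$ by \eqref{ellipunif}. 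Testing \eqref{eq:lin-source} against $v$ and retaining only the zero-order contribution yields
\begin{equation*}
  \|v\|_{L^2} \le \frac{1}{(2\si+1)(\alpha\delta)^{2\si/(2\si+1)}}\|g\|_{L^2},
\end{equation*}
the case $k=0$ of the first estimate.

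For $k \ge 1$ I would differentiate \eqref{eq:lin-source}: for any multi-index $\g$ with $|\g| \le k$, Leibniz yields
\begin{equation*}
  -\eps^2 \Delta(\d^\g v) + (2\si+1) u_0^{2\si} \d^\g v = \d^\g g - (2\si+1) \sum_{0 < \mu \le \g} \binom{\g}{\mu} \d^\mu(u_0^{2\si})\,\d^{\g - \mu} v.
\end{equation*}
Using $\|\d^\mu(u_0^{2\si})\|_{L^\infty} \lesssim \delta^{-|\mu|}$, applying the $L^2$ estimate just obtained to $\d^\g v$ and invoking the inductive hypothesis on $|\g|-1$ delivers $\|v\|_{H^k} \le C\delta^{-a}\|g\|_{H^k}$ for some explicit $a = a(k,d,\si)>0$. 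The fact that $v$ itself lies in $H^k$, and not only in $H^1$, follows from standard elliptic regularity, since the coefficient $u_0^{2\si}$ is smooth for fixed $\delta>0$.

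For the weighted estimate I would test \eqref{eq:lin-source} against $\<x\>^{2k}v$. Two integrations by parts give
\begin{equation*}
  \eps^2 \int |\nabla v|^2 \<x\>^{2k}\,\dd x + (2\si+1)\int u_0^{2\si} v^2 \<x\>^{2k}\,\dd x = \int g v \<x\>^{2k}\,\dd x + \frac{\eps^2}{2}\int v^2\,\Delta\bigl(\<x\>^{2k}\bigr)\,\dd x,
\end{equation*}
and the bound $|\Delta \<x\>^{2k}| \lesssim k^2 \<x\>^{2k-2}$ makes the last term of strictly lower weight. Coercivity on the left-hand side combined with Cauchy--Schwarz then produces
\begin{equation*}
  \|\<\cdot\>^k v\|_{L^2} \le C\delta^{-a}\|\<\cdot\>^k g\|_{L^2} + C k\eps\,\delta^{-a/2}\,\|\<\cdot\>^{k-1} v\|_{L^2},
\end{equation*}
and an induction on $k$ using $\eps \in (0,1]$ and $\|\<\cdot\>^{k-1} g\|_{L^2} \le \|\<\cdot\>^k g\|_{L^2}$ closes the estimate. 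The only genuine obstacle is bookkeeping the exponent $a$, inflated at each step both by the coercivity loss $\delta^{-2\si/(2\si+1)}$ and by the derivative losses $\delta^{-|\mu|}$; since the statement only asks for some $a>0$, no optimization is needed and a crude accounting suffices.
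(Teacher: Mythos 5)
Your proof follows the same strategy as the paper: Lax--Milgram with the coercivity \eqref{ellipunif} for the $L^2$ bound, then an induction on $k$ for the $H^k$ bound, using that every spatial derivative of $u_0^{2\si}$ costs only a power of $\delta^{-1}$ in $L^\infty$; and for the weighted estimate, a commutator/integration-by-parts identity followed by induction. The only differences are cosmetic — you apply the full Leibniz rule at once rather than peeling off one derivative $D=\partial_{x_j}$ per induction step as in the paper, and you test against $\<x\>^{2k}v$ directly rather than first introducing the commutator $[\<x\>^k,\Delta]$ and then pairing with $\<x\>^k v$; these are the same computation written in two ways, and both arguments are correct.
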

\begin{proof}
We have 
\begin{equation}
  \label{coercivity}
  \begin{aligned}
    \< -\eps^2\Delta  v+ (2\si+1)u_0^{2\si}v,v\>
    &\gtrsim \delta^{\frac{2\si}{2\si+1}}\|v\|_{L^2}^2 + \eps^2 \| v
      \|_{\dot H^1}^2  \\
    &\gtrsim \min\(\delta^{\frac{2\si}{2\si
      +1}},\eps^2\) \| v \|_{H^1}^2.  
  \end{aligned}
 \end{equation}
Thus the Lax-Milgram Theorem guarantees the existence of $v$ in $H^1$. 
The previous bound then shows that 
\begin{equation*}
  \|v \|_{L^2} \le \frac{C}{\delta^{\frac{2\si}{2\si +1}}} \| g \|_{L^2}.
\end{equation*}
In view of the construction of $u_0$ and of Lemma~\ref{lem:u0Hs},
\begin{equation*}
  \| u_0 \|_{\dot H^k} \lesssim  \delta^{\alpha-k/2-d/4},\quad
  \mbox{and}\quad  
 c\delta \le u_0 \le 1. 
\end{equation*}
Then we calculate, with $D = \partial_{x_j}$, for $j\in \{1,\dots,d\}$,
\begin{equation*}
 -\eps^2\Delta (D v)  + (2\si+1)u_0^{2\si} (Dv)  = D g - 2\si(2 \si +1)
 u_0^{2\si-1} (Du_0) v,  
\end{equation*}
from which we deduce 
\begin{align*}
\delta^{2\sigma} \| D u\|_{L^2} &\leq C \| Dg \|_{L^2} + C \| u_0^{2\si-1} (Du_0) \|_{L^\infty} \| v \|_{L^2}\\
& \le C \| Dg \|_{L^2} + C \delta^{-b}\| g \|_{L^2}
\end{align*}
for some constants $b$ and $C$ independent of $\delta$ and
$\varepsilon$. We then deduce the first part of the result by
induction on $k$.
\smallbreak

Suppose that $g\in \F H^k$, and multiply \eqref{eq:lin-source} by
$\<x\>^k$:
\begin{equation*}
  -\eps^2\<x\>^k\Delta v +(2\si+1)u_0^{2\si}\<x\>^k v = \<x\>^kg.
\end{equation*}
Introducing the commutator
\begin{equation*}
  [\<x\>^k,\Delta]= \<x\>^k\Delta -
\Delta\(\<x\>^k\cdot\)=-2k\<x\>^{k-2}x\cdot\nabla + \O\(\<x\>^{k-2}\),
\end{equation*}
the above equation also reads 
\begin{equation*}
  -\eps^2\Delta \( \<x\>^kv\) +(2\si+1)u_0^{2\si}\<x\>^k v = \<x\>^kg+
  \eps^2[\<x\>^k,\Delta]v.
\end{equation*}
Proceed like above, and take the $L^2$ inner product with $\<x\>^k v$:
on the one hand,
\begin{equation*}
  \< -\eps^2\Delta \( \<x\>^kv\) +(2\si+1)u_0^{2\si}\<x\>^k v, \<x\>^k
  v\> \ge \delta^{\frac{2\si}{2\si +1}}\|\<x\>^k  v\|_{L^2}^2,
\end{equation*}
and on the other hand,
\begin{align*}
  \< \<x\>^kg+
  \eps^2[\<x\>^k,\Delta]v, \<x\>^k  v\>
  &\le \|\<x\>^kg\|_{L^2} \|\<x\>^k
    v\|_{L^2} \\
  &\quad-2k  \eps^2\int_{\R^d} \<x\>^{k-2}x\cdot \nabla v \<x\>^k
    v \\
  &\quad+C \int_{\R^d} \<x\>^{k-2}|v|\<x\>^k|v|.
\end{align*}
The second term on the right hand side is integrated by parts,
\begin{align*}
  -2\int_{\R^d} \<x\>^{k-2}x\cdot \nabla v \<x\>^k
    v&= -\int_{\R^d} \<x\>^{2k-2}x\cdot \nabla \(v^2\) = \int_{\R^d}
       \nabla\cdot\(\<x\>^{2k-2}x\) v^2\\
  &=\O\(\int \<x\>^{2k-2}v^2\).  
\end{align*}
Cauchy-Schwarz inequality yields
\begin{equation*}
  \< \<x\>^kg+
  \eps^2[\<x\>^k,\Delta]v, \<x\>^k  v\>\le  \|\<x\>^k
    v\|_{L^2}\( \|\<x\>^kg\|_{L^2} + \eps^2 \|\<x\>^{k-2}v\|_{L^2}\).
  \end{equation*}
  Since we know from the first part of the lemma that
  \begin{equation*}
    \|v\|_{L^2}\lesssim \delta^{-\frac{2\si}{2\si+1}}\|g\|_{L^2}, 
  \end{equation*}
  the end of the lemma follows by induction. 
\end{proof}
We infer the following result, where we do not try to optimize the
parameters involved in the statement. 
\begin{proposition}\label{prop:exist-v}
  Let $k>d/2$ and $\si>0$  be integers. There exist $M(k,\si,d)>0$
  such that if
\begin{equation*}
  \eps\ll \delta^{2M}\ll 1,
\end{equation*}
then there exists $v\in H^k(\R^d)\cap \F
H^k(\R^d)$ such that $u_\eps=u_0+v$ solves
\begin{equation}\label{eq:u-eps-P0}
  -\eps^2\Delta u_\eps +|u_\eps|^{2\si}u_\eps = f,\quad f(x) =
  1-e^{-\delta}e^{-|x|^2/2}, 
\end{equation}
and
\begin{equation*}
  \|v\|_{H^k(\R^d)}  + \left\|\<\cdot \>^kv\right\|_{L^2(\R^d)}\lesssim
  \frac{\eps}{\delta^{M}}\ll \delta^M\ll 1. 
\end{equation*}
\end{proposition}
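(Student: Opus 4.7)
The plan is to solve \eqref{eq:u-eps-P0} by a Banach fixed point argument based on the linearization studied in Lemma~\ref{lem:LaxMilgram}. Writing $u_\eps = u_0 + v$ and using $u_0^{2\si+1} = f$, the equation becomes
\begin{equation*}
  -\eps^2\Delta v + (2\si+1) u_0^{2\si} v = \eps^2 \Delta u_0 - N(v),
\end{equation*}
where, since $\si$ is an integer,
\begin{equation*}
  N(v) = (u_0 + v)^{2\si+1} - u_0^{2\si+1} - (2\si+1) u_0^{2\si} v = \sum_{j=2}^{2\si+1} \binom{2\si+1}{j} u_0^{2\si+1-j} v^j
\end{equation*}
collects all quadratic and higher-order contributions. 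Lemma~\ref{lem:LaxMilgram} provides a continuous inverse $L^{-1}$ of $-\eps^2\Delta + (2\si+1) u_0^{2\si}$, both on $H^k$ and on $\F H^k$, whose norm is controlled by $\delta^{-a}$ for some $a = a(k,d,\si)$. I set
\begin{equation*}
  \Phi(v) = L^{-1}\bigl(\eps^2 \Delta u_0 - N(v)\bigr)
\end{equation*}
and look for its unique fixed point in a small ball of $X := H^k(\R^d) \cap \F H^k(\R^d)$, equipped with $\|v\|_X := \|v\|_{H^k} + \|\<\cdot\>^k v\|_{L^2}$.

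For the source term, Lemma~\ref{lem:u0Hs} and \eqref{eq:Delta-u0-Hs} yield $\|\eps^2 \Delta u_0\|_X \lesssim \eps^2 \delta^{-\mu_1}$ for some $\mu_1 = \mu_1(k,d,\si)$, hence $\|L^{-1}(\eps^2 \Delta u_0)\|_X \lesssim \eps^2 \delta^{-a - \mu_1}$. For the nonlinear term, the hypothesis $k > d/2$ makes $H^k$ a Banach algebra; combined with $0 < u_0 \le 1$ and the $H^k$-control of powers of $u_0$ provided by Lemma~\ref{lem:u0Hs}, this gives $\|u_0^{2\si+1-j} v^j\|_{H^k} \lesssim \delta^{-b} \|v\|_{H^k}^j$ for all $j \ge 2$, and the weighted bound follows from $\|\<\cdot\>^k v^j\|_{L^2} \lesssim \|v\|_{H^k}^{j-1}\|\<\cdot\>^k v\|_{L^2}$. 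Choosing $M = M(k,d,\si)$ large enough that $M$ dominates $a + \mu_1$ and all ancillary losses $a + b$, one verifies that, under the smallness assumption $\eps \ll \delta^{2M}$, the map $\Phi$ sends the ball $B_M := \{ v \in X : \|v\|_X \le \eps/\delta^M \}$ into itself: the source produces a term of size $\eps^2\delta^{-a-\mu_1} = (\eps/\delta^M)\cdot \eps\delta^{M - a - \mu_1} \ll \eps/\delta^M$, and each nonlinear term is bounded by $\delta^{-a-b}(\eps/\delta^M)^j \ll \eps/\delta^M$.

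The contraction property follows from the same estimates applied to $\Phi(v_1) - \Phi(v_2) = -L^{-1}(N(v_1) - N(v_2))$: writing $N(v_1) - N(v_2) = \sum_{j\ge 2} c_j u_0^{2\si+1-j}(v_1^j - v_2^j)$ and factoring $v_1 - v_2$ out of each difference $v_1^j - v_2^j$, one gets $\|N(v_1) - N(v_2)\|_X \lesssim \delta^{-b}(\|v_1\|_X + \|v_2\|_X)\|v_1 - v_2\|_X \ll \|v_1 - v_2\|_X$ on $B_M$, so $\Phi$ is a strict contraction. Banach's fixed point theorem yields the unique $v \in B_M$ with $\|v\|_X \lesssim \eps/\delta^M$, and by construction $u_\eps = u_0 + v$ solves \eqref{eq:u-eps-P0}. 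The main difficulty is not conceptual but accounting: one must track throughout the argument the powers of $\delta$ coming from $L^{-1}$ (via the coercivity constant $\delta^{2\si/(2\si+1)}$ of \eqref{ellipunif}), from $\Delta u_0$ (via Lemma~\ref{lem:u0Hs}) and from the multiplications by the small factor $u_0$, and consolidate them into a single exponent $M$ that closes the two inequalities in $H^k$ and $\F H^k$ simultaneously.
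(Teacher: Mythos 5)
Your proof is correct and follows essentially the same route as the paper: the paper sets up the Picard iteration $-\eps^2\Delta v_{n+1}+(2\si+1)u_0^{2\si}v_{n+1}=G(\eps,u_0,v_n)$ initialized at $v_0=0$ and shows geometric convergence of $\sum\|v_{n+1}-v_n\|_{H^k}$, which is precisely the contraction argument you phrase via Banach's fixed point theorem for $\Phi=L^{-1}(\eps^2\Delta u_0-N(\cdot))$, with the same bookkeeping of $\delta$-losses from Lemmas~\ref{lem:u0Hs} and \ref{lem:LaxMilgram}. The only cosmetic difference is that the paper first closes the $H^k$ estimate and then separately derives the weighted $\F H^k$ bound for the limit $v$, whereas you run the contraction directly in $X=H^k\cap\F H^k$; and the paper uses Taylor's formula with integral remainder for $N(v)$ while you expand the binomial — both valid since $\si\in\N$.
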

\begin{proof}
 Using the decomposition $u_\eps=u_0+v$, \eqref{eq:u-eps-P0} reads,
 in terms of $v$,
 \begin{equation*}
   -\eps^2\Delta v + (2\si+1)u_0^{2\si} v = -\eps^2\Delta u_0
   -2\si(2\si+1)v^2\int_0^1 (1-\theta)|u_0+\theta
   v|^{2\si-2}(u_0+\theta v)\dd\theta,
 \end{equation*}
 where we have used Taylor formula applied to the function
 $f(y)=|y|^{2\si}y$, for $y\in \R$. Denote by $G(\eps,u_0,v)$ the
above right hand side. We consider the iterative scheme
 \begin{equation}
 \label{iterative}
    -\eps^2\Delta v_{n+1} + (2\si+1)u_0^{2\si} v_{n+1} = G(\eps,u_0,v_n), 
 \end{equation}
 initialized with $v_0=0$. Lemmas~\ref{lem:u0Hs} and
 \ref{lem:LaxMilgram} yield, since  $H^k(\R^d)$
 is a Banach algebra and $\si$ is an integer,
 \begin{align*}
   \|v_{n+1}\|_{H^k}
   & \le
   \frac{C}{\delta^a}\(\eps^2 \delta^{\alpha- 1-k/2-d/4}+
   \|v_n\|_{H^k}^2\(\|u_0-1\|_{H^k}^{2\si-1} +
     \|v_n\|_{H^k}^{2\si-1}\)\)\\
& \le
   \frac{C}{\delta^a}\(\eps^2 \delta^{\alpha- 1-k/2-d/4}+
   \|v_n\|_{H^k}^2\(C\delta^{\alpha-k/2-d/4} +
     \|v_n\|_{H^k}^{2\si-1}\)\)
   . 
 \end{align*}
In the above estimate, we have considered $u_0-1$ and not $u_0$: as
$G(\eps,u_0,v_n)$ is a linear combination of terms of the form
$u_0^{2\si+1-\ell} v_n^{\ell}$, $\ell\ge 2$, either all the
derivatives fall on $v_n$, and we use the estimate
\begin{equation*}
  \|u_0^{2\si+1-\ell} \d^\beta v_n^{\ell}\|_{L^2}\le
  \|u_0\|_{L^\infty}^{2\si+1-\ell} \|\d^\beta v_n^{\ell}\|_{L^2}\le
  \|v_n\|_{H^k}^\ell,
\end{equation*}
or at least one derivative affects $u_0$, and then $u_0$ can be
replaced by $u_0-1$ in the corresponding term(s). 
We thus have an estimate of the form
\begin{equation*}
  \|v_{n+1}\|_{H^k}\le C\(\frac{\eps^2}{\delta^{\alpha_1}}+
  \delta^{-\alpha_2}\|v_n\|_{H^k}^2 + \|v_n\|_{H^k}^{2\si+1}\),\quad
  \alpha_1,\alpha_2>0. 
\end{equation*}
To simplify the presentation, we may suppose $\alpha_1=\alpha_2=:M$. We
see that the property $\|v_n\|_{H^k}\lesssim \eps  \delta^{-M}$ is
propagated by induction, provided that
\begin{equation*}
  \frac{\eps^2}{\delta^M}\ll \frac{\eps}{\delta^M},\quad
  \frac{\eps^2}{\delta^{3M}}\ll \frac{\eps}{\delta^M},\quad
  \frac{\eps}{\delta^M}\ll 1.
\end{equation*}
The strongest condition is the middle one, which yields the first
condition of the proposition. Up to decreasing the implicit constant
$0<\eta<1$ in the notation
\begin{equation*}
  \eps\ll \delta^{2M}\Longleftrightarrow \eps\le \eta\delta^{2M},
\end{equation*}
we infer that the series $\sum \|v_{n+1}-v_n\|_{H^k}$ converges
geometrically, that is,
\begin{equation*}
  v_n\Tend n \infty v\quad\text{in }H^k(\R^d),
\end{equation*}
and $u_\eps=u_0+v$ solves \eqref{eq:u-eps-P0}.
\smallbreak

Lemma~\ref{lem:LaxMilgram} also yields
\begin{equation*}
  \|\<\cdot\>^kv\|_{L^2}\lesssim \frac{1}{\delta^a}\(\|\eps^2\<\cdot\>^k\Delta
  u_0\|_{L^2}+ \|\<\cdot\>^k u_0^{2\si-1}v^2\|_{L^2}+ \|\<\cdot\>^k
  v^{2\si+1}\|_{L^2}\). 
\end{equation*}
In view of the estimate $\|u_0\|_{L^\infty}\le 1$ and since
$\|v\|_{L^\infty}\lesssim \|v\|_{H^k}\lesssim \eps\delta^{-M}$, we
infer, invoking Lemma~\ref{lem:u0Hs},
\begin{equation*}
  \|\<\cdot\>^kv\|_{L^2}\lesssim
  \frac{\eps^2}{\delta^{\alpha_3}} + \(\|v\|_{L^\infty} +
  \|v\|_{L^\infty}^{2\si}\) \|\<\cdot\>^kv\|_{L^2}\lesssim
  \frac{\eps^2}{\delta^{\alpha_3}} +
  \frac{\eps}{\delta^M}\|\<\cdot\>^kv\|_{L^2}, 
\end{equation*}
for some $\alpha_3>0$, hence, since $\frac{\eps}{\delta^M}\ll
\delta^M\ll 1$,
\begin{equation*}
  \|\<\cdot\>^kv\|_{L^2}\lesssim
  \frac{\eps^2}{\delta^{\alpha_3}},
\end{equation*}
and the proposition follows. 
\end{proof}
\begin{remark}[Case with damping]\label{rem:damping}
  If in \eqref{eq:NLS}, the term $\eps^2\Delta$ is replaced with
  $(1-i\nu)\eps^2\Delta$ with $\nu>0$ so the new term is dissipative,
  then we can adapt the above approach. 
  Indeed, we obtain in this case (with $u_0 \in \R$) 
\begin{equation*}
 i \nu \eps^2 \Delta v  -\eps^2\Delta v +(\si+1)|u_0|^{2\si}v + \si
  |u_0|^{2\si} \bar v = -(1 - i \nu) \eps^2\Delta
  u_0+\O\(|v|^2\). 
\end{equation*}
Taking the real and imaginary parts by writing $v = p + iq$ linear operator associated with this system is 
\begin{align*}
\begin{pmatrix}
- \eps^2 \Delta + (2\si+1)u_0^{2\si} & - \nu \eps^2 \Delta \\
\nu \eps^2 \Delta  & - \eps^2 \Delta + u_0^{2\si}
\end{pmatrix}\begin{pmatrix}
p \\q 
\end{pmatrix}
\end{align*}
We can write this system as 
\begin{align*}
\begin{pmatrix}
A & C \\
- C^T & B
\end{pmatrix}\begin{pmatrix}
p \\q 
\end{pmatrix} = \begin{pmatrix}
f \\ g 
\end{pmatrix}
\end{align*}
where $A$ and $B$ are similar to the previous case (invertible in $H^s$ with inverse norm independent of $\eps$). 
When we solve the linear system we can write: 
\begin{equation*}
p = A^{-1} f -  A^{-1}C q,  
\end{equation*}
and thus
\begin{equation*}
  (B + C^T A^{-1} C ) q = C^T A^{-1} f + g,
\end{equation*}
and $B + C^T A^{-1} C$, which is now self-adjoint should be invertible
for $\nu$ small enough.  
Indeed, the estimate \eqref{coercivity} when $\varepsilon^2 \ll \delta^{2\si}$ implies that 
\begin{equation*}
\|A^{-1} C  u \|_{\dot H^s} \le C \nu \|u \|_{H^s},
\end{equation*}
uniformly in $\varepsilon$. Hence we have
\begin{equation*}
  \|C A^{-1} C  u \|_{H^s} \le C \varepsilon^2  \nu \|u \|_{H^{s+2}}, 
\end{equation*}
and in particular for $\nu$ small enough, the operator 
$B + C^T A^{-1} C$ satisfies the coercive estimate \eqref{coercivity}
as well as the uniform estimate \eqref{ellipunif}. We thus deduce that
the same result holds for the dissipative case. Cumbersome details are
left out. 
\end{remark}

\subsubsection*{Time dependent forcing and cubic nonlinearity}

In the case where $f=2Q(x)e^{-3it}$ and $\si=1$ considered in Section~\ref{sec:time-periodic} (see
Theorem~\ref{theo:cubic}), the equation for $v$ becomes  
 \begin{equation*}
   -\eps^2\Delta v -3Pv+3U_k^{2}v = -\eps^2\Delta
  U_k+\O\(v^2\).
\end{equation*}
Recall that for $k=0,1,2$, $U_k$ is given by
\begin{equation*}
  U_k(x) =2\cos\( \frac{\theta(x)}{3}+\frac{2k\pi}{3}\),\quad \cos \(
  \frac{\theta(x)}{3}\) =e^{-\frac{\delta}{2}-\frac{|x|^2}{4}}\in
  \left(0,e^{-\frac{\delta}{2}}\right].
\end{equation*}
In particular,
\begin{equation*}
  \frac{\theta(x)}{3}\in
  \left[\arccos e^{-\frac{\delta}{2}},\frac{\pi}{2}\right),
\end{equation*}
and therefore
\begin{equation*}
  \frac{\theta(x)}{3}+\frac{2k\pi}{3}\in \left[\arccos
    e^{-\frac{\delta}{2}}+\frac{2k\pi}{3}, (4k+3)\frac{\pi}{6}\right). 
\end{equation*}
This shows that $|U_0|$ and $|U_2|$ are arbitrarily close to zero. On
the other hand, writing for instance
\begin{equation*}
  U_1(x)
   = - \cos\(\frac{\theta(x)}{3}\)- \sqrt 3 \sin
    \(\frac{\theta(x)}{3}\)
   =-e^{-\frac{\delta}{2}-\frac{|x|^2}{4}} - \sqrt 3
    \(1-e^{-\delta-\frac{|x|^2}{2}}\)^{1/2},
\end{equation*}
we compute
\begin{equation*}
  |U_1(x)|^2 \ge e^{-\delta-\frac{|x|^2}{2}}
  +3\(1-e^{-\delta-\frac{|x|^2}{2}}\)= 3-2 e^{-\delta-\frac{|x|^2}{2}}\ge3-2e^{-\delta}>1,
\end{equation*}
and we see that the operator $-\eps^2\Delta  -3+3U_1^{2}$ is uniformly
elliptic  in the sense that
\begin{equation*}
  \<-\eps^2\Delta v -3v+3U_1^{2}v,v\>\ge \eps^2\|\nabla v\|_{L^2}^2 +
  c(\delta)\|v\|_{L^2}^2,
\end{equation*}
where $c(\delta)>0$ provided that $\delta>0$.
We can thus resume the proof of Lemma~\ref{lem:LaxMilgram}. 
When considering derivatives of $v$, we have like above
 \begin{equation*}
   -\eps^2\Delta (D v) -v+3U_1^{2}Dv = Dg-6U_1DU_1 v,
\end{equation*}
and we can conclude similarly.

\subsubsection*{End of the proof of Theorem~\ref{theo:constr-stat}}
The idea now is to use Sobolev embedding at the level of $\hat
u_\eps$, as opposed to $u_\eps$ directly, and write,
\begin{equation*}
 \sup_{\xi\in \R^d} \left||\xi|^{d+2\alpha}\(\widehat u_\eps(\xi)-\widehat
    u_0(\xi)\)\right| \lesssim \left\||\cdot|^{d+2\alpha}
    \(\widehat u_\eps-\widehat u_0\)\right\|_{H^s(\R^d)}, 
\end{equation*}
provided that $s>d/2$, where $\alpha$ corresponds to the frequency
cascade for $u_0$, as stated in Theorem~\ref{theo:constr-stat}. In
view of Plancherel formula, 
\begin{equation*}
  \left\||\cdot|^{d+2\alpha}\(\widehat u_\eps-\widehat
    u_0\)\right\|_{H^s(\R^d)}= \left\|\<\cdot\>^s \Lambda^{d+2\alpha}\( u_\eps-
    u_0\)\right\|_{L^2(\R^d)},\quad \Lambda=(-\Delta)^{1/2}. 
\end{equation*}
The operator involved above has symbol $\<x\>^s |\xi|^{{d+2\alpha}}$,
and Young inequality yields the pointwise estimate
\begin{equation*}
  \<x\>^s |\xi|^{d+2\alpha} \le \<x\>^{3s}+
  |\xi|^{\frac{3}{2}(d+2\alpha)}. 
\end{equation*}
Let
\begin{equation*}
  k = \left\lceil \frac{3}{2}(d+2\alpha)\right\rceil. 
\end{equation*}
Then $k>3d/2$, and we set $s=k/3>d/2$, so we have the pointwise estimate
\begin{equation*}
  \<x\>^s |\xi|^{d+2\alpha} \le \<x\>^{k}+
  \<\xi\>^k. 
\end{equation*}
Formally, this implies
\begin{equation*}
  \left\|\<\cdot\>^s \Lambda^{d+2\alpha}\( u_\eps-
    u_0\)\right\|_{L^2(\R^d)}\lesssim \left\|\<\cdot\>^k\( u_\eps-
    u_0\)\right\|_{L^2(\R^d)}+ \|u_\eps-
    u_0\|_{H^k(\R^d)}.
\end{equation*}
The justification of this inequality is not completely obvious though,
and we refer to 
\cite{Helffer1984} (see also \cite{BCM08}) for a complete proof, based
on pseudodifferential calculus.

\smallbreak

Invoking Proposition~\ref{prop:exist-v} with the above $k$, we get
some $M>0$ such that if $\eps\ll \delta^{2M}\ll 1$,
\begin{equation*}
  \left\|\<\cdot\>^k\( u_\eps-
    u_0\)\right\|_{L^2(\R^d)}+ \|u_\eps-
    u_0\|_{H^k(\R^d)}\lesssim \frac{\eps}{\delta^M}\ll
  \delta^M\ll 1,
\end{equation*}
and thus
\begin{equation*}
  \sup_{\xi\in \R^d} \left||\xi|^{d+2\alpha}\(\widehat u_\eps(\xi)-\widehat
    u_0(\xi)\)\right| \lesssim \frac{\eps}{\delta^M}\ll
  \delta^M\ll 1,
\end{equation*}
so Theorem~\ref{theo:constr-stat} follows with $K=M/2$. 
\subsection{Conserved energy}
\label{sec:energy}

Even in the presence of the forcing term $f$ (which we keep
considering independent of time now), the equation \eqref{eq:NLS}
enjoys a Hamiltonian structure. To see that some energy is conserved,
one may follow the usual strategy for Schr\"odinger equations (see
e.g. \cite[Section~3.1]{CazCourant}): multiply \eqref{eq:NLS} by $\d_t
\overline u$, 
integrate in space, and consider the real part of the outcome. This
yields formally
\begin{equation*}
  \frac{\dd}{\dd t}E_\eps\(u(t)\)=0,
\end{equation*}
where, since $f$ does not depend on time, the energy $E_\eps$ is
defined by 
\begin{equation*}
  E_\eps(w) = \frac{\eps^2}{2}\|\nabla w\|_{L^2(\R^d)}^2
  +\frac{1}{2\si+2}\|w\|_{L^{2\si+2}(\R^d)}^{2\si+2} -\RE
  \int_{\R^d}f\overline w.
\end{equation*}
As a matter of fact, this computation is only formal in the framework
that we consider, since as pointed out before, we consider
perturbations of the algebraic solution $u_0$, and $u_0$ belongs to no
other Lebesgue space than $L^\infty(\R^d)$. Writing $u=u_0+v$, giving
a rigorous meaning to a conserved energy for $v$ becomes possible, at
least when $\si$ is an integer, in
the spirit of renormalization. For simplicity, we consider the case
$\si=1$. 

Recall the standard notation
\begin{equation*}
  \< g,h \> = \mathrm{Re}\int_{\R^d} g \bar h.
\end{equation*}
\begin{lemma}
Let $d\ge 1$, and assume that $f$ is real-valued. 
Let $u_\eps(x) \in \R$ be such that
\begin{equation*}
  - \eps^2 \Delta u_\eps   +
  u_\varepsilon^{3}=f(x). 
\end{equation*}
Let  $u$ be the solution to \eqref{eq:NLS} with $\si=1$, and let $v =
u - u_\eps$.  If $u$ is such that $v\in C([0,T];H^1\cap L^4)$, 
then formally,
\begin{equation*}
 \E(v)  = \frac{\eps^2}{2}\|\nabla v\|_{L^2}^2+  \| u_\eps
 v\|_{L^2}^2 + 
 \int u_\eps^2\(\RE v\)^2
+  \RE\int |v|^2 v   u_\eps
 + \frac14 \|v\|_{L^4}^4
\end{equation*}
is independent of $t\in [0,T]$. 
\end{lemma}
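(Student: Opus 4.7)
The plan is to exploit the Hamiltonian structure of the forced cubic NLS. Multiplying $i\d_t u + \eps^2 \Delta u = |u|^2 u - f$ by $\d_t \bar u$, integrating over $\R^d$ and taking real parts, one obtains (at least formally) the conservation law $\frac{\dd}{\dd t}E_\eps(u)=0$, where
\[
E_\eps(w) = \frac{\eps^2}{2}\|\nabla w\|_{L^2}^2 + \frac{1}{4}\|w\|_{L^4}^4 - \RE\int_{\R^d} f \bar w.
\]
Since $u_\eps$ is time-independent, $E_\eps(u_\eps)$ is constant, so my strategy is to expand $E_\eps(u_\eps+v)-E_\eps(u_\eps)$ as a functional of $v$ alone and identify the result with $\E(v)$. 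This renormalization is essential because neither $u$ nor $u_\eps$ need lie in the Lebesgue spaces that would make $E_\eps$ individually finite, while the quadratic and higher contributions in $v$ do belong to $L^1$ under the hypothesis $v\in C([0,T];H^1\cap L^4)$.

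Concretely, I substitute $u=u_\eps+v$ and use that $u_\eps$ is real to write $|u|^2 = u_\eps^2+2u_\eps\RE v+|v|^2$, so that $\tfrac14|u|^4$ expands into monomials of total degree four in $u_\eps$ and $v$. In parallel, I expand
\[
\|\nabla u\|_{L^2}^2 = \|\nabla u_\eps\|_{L^2}^2 + 2\RE\int \nabla u_\eps\cdot\nabla \bar v + \|\nabla v\|_{L^2}^2, \qquad -\RE\int f\bar u = -\int f u_\eps-\int f \RE v,
\]
and collect by degree in $v$. The $v$-independent contributions reassemble into $E_\eps(u_\eps)$ and cancel on subtraction; the cubic and quartic pieces produce $\RE\int u_\eps|v|^2 v$ and $\tfrac14\|v\|_{L^4}^4$; the quadratic pieces yield $\tfrac{\eps^2}{2}\|\nabla v\|_{L^2}^2$, the term $\int u_\eps^2 |v|^2$, and $\int u_\eps^2(\RE v)^2$.

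The crucial step is the vanishing of the three linear-in-$v$ contributions. After integrating by parts in the kinetic cross term (legitimate since $u_\eps\in H^k$ for $k>d/2$ by Proposition~\ref{prop:exist-v}, and $v\in H^1$), they combine into
\[
\int_{\R^d}\bigl(-\eps^2\Delta u_\eps + u_\eps^3 - f\bigr)\RE v\, \dd x,
\]
which is zero by the stationary equation imposed on $u_\eps$. This cancellation is the heart of the proof: only because $u_\eps$ solves the stationary profile equation does the bilinear coupling drop out, leaving a genuine functional of $v$ alone. Conservation of $E_\eps(u)$ then transfers directly to conservation of $\E(v)$. I expect the main obstacle to be bookkeeping of coefficients in the quartic expansion rather than anything conceptual; an equivalent and more direct route is to derive the evolution equation for $v$, namely $i\d_t v+\eps^2\Delta v = u_\eps^2 v + 2u_\eps^2\RE v + 2u_\eps v\RE v + u_\eps|v|^2 + |v|^2 v$, pair it with $\d_t\bar v$, take the real part, and recognize each term on the right as the time derivative of the corresponding summand of $\E(v)$.
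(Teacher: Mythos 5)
Your proposal takes essentially the same route as the paper. The paper packages the expansion of $E_\eps(u_\eps+v)$ into the single identity
\[
E_\eps(w_1+w_2)=E_\eps(w_1)+\RE\int\overline{w_2}\bigl(-\eps^2\Delta w_1+|w_1|^2 w_1 - f\bigr)+\E(w_2),
\]
with $u_\eps$ replaced by $w_1$ in the definition of $\E$, and then substitutes $w_1=u_\eps$, $w_2=v$ so that the bilinear term vanishes by the stationary equation; your expansion-and-cancellation, and your alternative route of pairing the $v$-equation with $\d_t\bar v$, are unpackaged versions of exactly this. The paper additionally passes from formal to rigorous conservation by a Gaussian cutoff $e^{-\eta|x|^2}$ and the limit $\eta\to 0$; you stop at the formal level, which is all the lemma claims.

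One coefficient deserves a second look. With $v=p+iq$ and $u_\eps$ real, the degree-two (in $v$) part of $\tfrac14|u_\eps+v|^4$ is
\[
\tfrac{3}{2}\,u_\eps^2 p^2+\tfrac{1}{2}\,u_\eps^2 q^2 \;=\; u_\eps^2(\RE v)^2+\tfrac12\,u_\eps^2|v|^2,
\]
so the expansion actually produces $\tfrac12\int u_\eps^2|v|^2$, not $\int u_\eps^2|v|^2$. The same is visible from your differential route: pairing $u_\eps^2 v$ against $\d_t\bar v$ and taking real parts gives $\tfrac12\,\d_t\int u_\eps^2|v|^2$, while $2u_\eps^2\RE v$ gives $\d_t\int u_\eps^2(\RE v)^2$. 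Your write-up reproduces the coefficient $1$ that appears in the lemma's stated $\E$, which looks like a typo in the statement; with the corrected factor $\tfrac12$, the ensuing coercivity estimate reads $\E(v)\ge\tfrac{\eps^2}{2}\|\nabla v\|_{L^2}^2+\tfrac12\int u_\eps^2|v|^2$, a change that has no qualitative effect on what follows.
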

\begin{proof}
  We first note the identity, for $w_1,w_2\in H^1\cap L^4$ independent
  of time,
  \begin{equation*}
    E_\eps(w_1+w_2) = E_\eps(w_1)+\RE\int \overline{w_2}\( -\eps^2\Delta w_1+|w_1|^2w_1-f\)+\E(w_2),
  \end{equation*}
where, in the definition of $\E$, we have replaced $u_\eps$ with
$w_1$. Substituting $w_1$ with $u_\eps$ and $w_2$ with $v$ formally
yields the conservation of $\E(v)$. 
Proceeding like in \cite[Lemma~6.5.3]{CazCourant}, that is,
multiplying $u_\eps$ and $v$ by $e^{-\eta|x|^2}$ then letting $\eta\to
0$, we infer
that if $v$ has the regularity assumed in the statement, then indeed
$\E(v)$ is independent of time. 
\end{proof}
Cauchy-Schwarz inequality implies, since $u_\eps$ is real-valued,
\begin{equation*}
  \left|\int u_\eps|v|^2 \RE v   \right|\le \(\int |v|^4\)^{1/2}\(
  \int u_\eps^2(\RE v)^2\)^{1/2},
\end{equation*}
hence by Young inequality,
\begin{equation*}
  \left|\RE\int |v|^2 v  \bar u_\eps\right|\le \(\int |v|^4\)^{1/2}\(
  \int u_\eps^2(\RE v)^2\)^{1/2}\le \frac{1}{4}\int |v|^4+ \int
  u_\eps^2(\RE v)^2. 
\end{equation*}
This readily yields
\begin{equation*}
  \E(v)  \ge \frac{\eps^2}{2}\|\nabla v\|_{L^2}^2+ 
 \int u_\eps^2|v|^2,
\end{equation*}
which is a rather weak form of coercivity, since the lower bound for
$u_\eps^2$ involves a positive power of $\delta$,
as
\begin{equation*}
  u_\eps(x)\ge u_0(x)-\|v\|_{\infty}\ge
  \(1-e^{-\delta}\)^{\frac{1}{2\si+1}}- \delta^M\gtrsim
  \delta^{\frac{1}{2\si+1}}, 
  \end{equation*}
  where we have used Proposition~\ref{prop:exist-v} and Sobolev
  embedding to neglect the
  contribution of $v$. 
It is possible to
infer some dynamical stability from such estimates, up to losing some
powers of $\eps$ or $\delta$. Loosely speaking, if $\E(v)$ is small
initially, then $\eps\|\nabla v(t)\|_{L^2}$ and
$\delta\|v(t)\|_{L^2}$ remain small as long as $v\in C([0,T];H^1\cap
L^4)$, which yields some smallness for $v$ in $H^1$, provided that $\E(v_{\mid
  t=0})$ is sufficiently small in terms of $\eps$ and $\delta$.  
We do not pursue this direction here, which could easily lead to
cumbersome statements.

\subsection{Stability via Strichartz estimates}

 When the nonlinearity is
smooth and $L^2$-subcritical or critical, we can directly get a
dynamical stability result thanks to Strichartz estimates. The cases
thus covered are:
\begin{itemize}
\item cubic or quintic in 1D ($d=1$ and $\si=1$ or $2$),
\item cubic in 2D ($d=2$ and $\si=1$). 
\end{itemize}

We consider perturbations of the form $u=\u_0+v$: the goal is to prove
that $v$ is small in $L^2$ on some nontrivial time interval. 
The nonlinearity has to be smooth because $\u_0$ is in no Lebesgue
space apart from $L^\infty$, so for instance $|\u_0|^2v$ and $|v|^2v$
are not estimated in the same space, so writing
\begin{equation*}
  \left| |\u_0+v|^{2\si}(\u_0+v)-|\u_0|^{2\si}\u_0\right|\lesssim
  \(|\u_0|^{2\si}+|v|^{2\si}\)|v| 
\end{equation*}
is not suited to the use of Strichartz estimates in this context. The
nonlinearity has to be smooth so
$|\u_0+v|^{2\si}(\u_0+v)-|\u_0|^{2\si}\u_0$ can be written as a finite sum
of terms like evoked above.

\begin{definition}\label{def:adm}
Let $d=1$ or $2$. 
 A pair $(q,r)$ is {\bf admissible} if $2\le
 r\le\infty$ if $d=1$, $2\le r< 
  \infty$ if $d=2$,
  and 
  \begin{equation*}
    \frac{2}{q}=\delta(r):= d\left( \frac{1}{2}-\frac{1}{r}\right).
  \end{equation*}
\end{definition}
The following Strichartz estimates easily follow from the usual ones
(see e.g. \cite{CazCourant,TaoDisp}) by considering the rescaling
$u(t,\frac{x}{\eps})$: 
\begin{lemma}[Strichartz
  estimates]\label{lem:strichartz}  
Denote $U^\eps_0(t)=e^{i\eps^2t \Delta}$.\\
$(1)$ \emph{Homogeneous Strichartz estimate.} For any admissible pair
\index{admissible pair} $(q,r)$,
there exists $C_{q}$ independent of $\eps$ such that
\begin{equation*}
  \eps^{2/q}\|U^\eps_0 \varphi\|_{L^{q}(\R;L^{r})} \le C_q 
\|\varphi \|_{L^2},\quad \forall \varphi\in L^2(\R^d).
\end{equation*}
$(2)$ \emph{Inhomogeneous Strichartz estimate.}
For a time interval $I$, denote
\begin{equation*}
  D_I^\eps(F)(t,x) = \int_{I\cap\{\tau\le
      t\}} U^\eps_0(t-\tau)F(\tau,x)\dd \tau. 
\end{equation*}
For all admissible pairs \index{admissible pair}$(q_1,r_1)$ and~$
    (q_2,r_2)$, and any 
    interval $I$, there exists $C=C_{r_1,r_2}$ independent of $\eps$ and
    $I$ such that 
\begin{equation}\label{eq:strichnl}
      \eps^{2/q_1+2/q_2}\left\lVert D_I^\eps(F)
      \right\rVert_{L^{q_1}(I;L^{r_1})}\le C \left\lVert
      F\right\rVert_{L^{q'_2}\(I;L^{r'_2}\)},
\end{equation}
for all $F\in L^{q'_2}(I;L^{r'_2})$.
\end{lemma}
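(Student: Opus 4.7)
The plan is to reduce both estimates to the standard Strichartz estimates for the free Schr\"odinger propagator $e^{it\Delta}$, by means of a simple time rescaling that absorbs the parameter $\eps^2$ in front of the Laplacian. Since $U^\eps_0(t)\varphi$ is the solution at time $t$ of $i\d_t u+\eps^2\Delta u=0$ with $u_{\mid t=0}=\varphi$, setting $v(s,x):=u(s/\eps^2,x)$ gives $i\d_s v+\Delta v=0$ with $v_{\mid s=0}=\varphi$, hence the identity
\begin{equation*}
  U^\eps_0(t)\varphi(x) = \(e^{is\Delta}\varphi\)(x)\Big|_{s=\eps^2 t}.
\end{equation*}
I would then record the change-of-variables formulas: for any $1\le q<\infty$ and any space-Lebesgue norm,
\begin{equation*}
  \left\|w(\eps^2\cdot)\right\|_{L^q_t L^r_x}=\eps^{-2/q}\left\|w\right\|_{L^q_s L^r_x},\qquad \left\|F(\cdot/\eps^2)\right\|_{L^{q'_2}_\tau L^{r'_2}_x}=\eps^{2/q'_2}\left\|F(\cdot)\right\|_{L^{q'_2}_\sigma L^{r'_2}_x}.
\end{equation*}

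For item $(1)$, I would apply the usual homogeneous Strichartz estimate $\|e^{is\Delta}\varphi\|_{L^q_s L^r_x}\le C_q\|\varphi\|_{L^2}$ for admissible $(q,r)$ (recalling that admissibility is scale-invariant, i.e.\ it involves only the relation $2/q=d(1/2-1/r)$, so the same pairs are admissible for $U^\eps_0$), and use the rescaling formula above with $w(s,x)=e^{is\Delta}\varphi$. This immediately produces the factor $\eps^{-2/q}$ on the right-hand side, which upon multiplication by $\eps^{2/q}$ yields the stated bound with $C_q$ independent of $\eps$.

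For item $(2)$, I would perform the analogous rescaling on the Duhamel integral. Writing $\sigma=\eps^2\tau$ and $s=\eps^2 t$ and setting $G(\sigma,x):=F(\sigma/\eps^2,x)$, one checks
\begin{equation*}
  D^\eps_I(F)(t,x)=\frac{1}{\eps^2}\int_{\eps^2I\cap\{\sigma\le s\}} e^{i(s-\sigma)\Delta}G(\sigma,x)\,\dd\sigma,
\end{equation*}
and then applies the standard inhomogeneous Strichartz estimate (valid for all admissible pairs $(q_1,r_1)$ and $(q_2,r_2)$, uniformly in the interval). Using the two change-of-variables formulas above to translate the norms back from $(s,\sigma)$ to $(t,\tau)$, one accumulates a factor $\eps^{-2-2/q_1}$ on the left and $\eps^{2/q'_2}$ on the right, and a direct bookkeeping (using $2/q'_2=2-2/q_2$) produces exactly the weight $\eps^{2/q_1+2/q_2}$ on the left-hand side and leaves the right-hand side in terms of $F$ with no extra power of $\eps$.

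There is no real obstacle here beyond careful bookkeeping of the powers of $\eps$: the admissibility relation $2/q=d(1/2-1/r)$ is what guarantees that the homogeneous estimate is scale-invariant up to the explicit weight $\eps^{2/q}$, and the inhomogeneous case is a straightforward extension of the same argument. The result therefore follows immediately from the classical Strichartz estimates (for which we refer to \cite{CazCourant,TaoDisp}) applied to $e^{is\Delta}$ after the time rescaling $s=\eps^2 t$.
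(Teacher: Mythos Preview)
Your proof is correct. The paper itself does not spell out the argument; it only says the estimates ``easily follow from the usual ones (see e.g.\ \cite{CazCourant,TaoDisp}) by considering the rescaling $u(t,\frac{x}{\eps})$''. So the paper uses a \emph{spatial} rescaling $x\mapsto x/\eps$, whereas you use a \emph{time} rescaling $s=\eps^2 t$. Both reduce to the standard Strichartz estimates for $e^{it\Delta}$ and both are elementary; the difference is that the paper's spatial rescaling requires invoking the admissibility relation $2/q=d(1/2-1/r)$ to convert the accumulated power $\eps^{d/r-d/2}$ into $\eps^{-2/q}$, while your time rescaling produces the factor $\eps^{-2/q}$ directly without touching the spatial norms. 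Your bookkeeping for the inhomogeneous part is correct: the factors $\eps^{-2}$ from $d\tau$, $\eps^{-2/q_1}$ from the outer norm, and $\eps^{2/q_2'}=\eps^{2-2/q_2}$ from the source norm combine to give exactly $\eps^{-2/q_1-2/q_2}$.
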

\noindent {\bf First case: $\si=1$, with $d=1$ or $d=2$.}
A function
$u=\u_0+v$ solves \eqref{eq:NLS} if and only if $v$ solves
\begin{equation}
  \label{eq:v}
  i\d_t v +\eps^2\Delta v = 2|\u_0|^2 v + \u_0^2\bar v + 2\u_0|v|^2+\bar
  \u_0 v^2+|v|^2v-\eps^2\Delta \u_0. 
\end{equation}
With initial data $v_0$ and $t\in I\subset [0,+\infty)$, Duhamel's formula reads
\begin{equation*}
  v(t) = U_0^\eps(t)v_0 -i D^\eps_I \( 2|\u_0|^2 v + \u_0^2\bar v + 2\u_0|v|^2+\bar
  \u_0 v^2+|v|^2v-\eps^2\Delta \u_0\). 
\end{equation*}
Local existence in the type of space evoked in
Theorem~\ref{theo:stab-strichartz} follows from a standard fixed point
argument on the above Duhamel's formula, involving the same estimates
as below. We focus on such estimates and the bootstrap argument, and
leave out the local existence part.

Let $q_0$ such that the pair $(q_0,4)$ is admissible:
$q_0=\frac{8}{d}$, so $q_0=8$ if
$d=1$, $q_0=4$ if $d=2$. Strichartz estimates yield
\begin{align*}
  \|v\|_{L^\infty(I;L^2)}&\lesssim \|v_0\|_{L^2} + \| |\u_0|^2
  v\|_{L^1(I;L^2)} + \|\u_0 v^2\|_{L^{1}(I;L^2)}\\
&\quad +\eps^{-d/4} \|
  v^3\|_{L^{q_0'}(I;L^{4/3})}  + \|\eps^2\Delta \u_0\|_{L^{1}(I;L^2)}. 
\end{align*}
Using H\"older inequality, and the fact that $\|\u_0\|_{L^\infty}\le 1$,
\begin{align*}
  \|v\|_{L^\infty(I;L^2)}&\lesssim \|v_0\|_{L^2} + \| 
  v\|_{L^1(I;L^2)} + \| v\|^2_{L^{2}(I;L^4)}\\
&\quad +\eps^{-d/4} \|
  v\|^3_{L^{3q_0'}(I;L^{4})}  + |I| \|\eps^2\Delta \u_0\|_{L^2}. 
\end{align*}
As $q_0>2$ and $3q_0'\le q_0$ (with equality in the $L^2$-critical case $d=2$), 
\begin{equation*}
  \| v\|_{L^{2}(I;L^4)}\le |I|^{a} \| v\|_{L^{q_0}(I;L^4)},\quad  \|
  v\|_{L^{3q_0'}(I;L^{4})}  \le |I|^{b}\|
  v\|_{L^{q_0}(I;L^{4})}  ,
\end{equation*}
for some $a>0$ and $b\ge 0$, and so, for $I\subset [0,T]$ with $T$ finite,
\begin{equation*}
   \|v\|_{L^\infty(I;L^2)}\lesssim \|v_0\|_{L^2} + \| 
  v\|_{L^1(I;L^2)} + \| v\|^2_{L^{q_0}(I;L^4)}+\eps^{-d/4} \|
  v\|^3_{L^{q_0}(I;L^{4})}  +  \|\eps^2\Delta \u_0\|_{L^2}.
\end{equation*}
We get rid of the second term on the right hand side by Gr\"onwall
lemma: for $I=[0,t]$, $0\le t\le T$,
\begin{equation}\label{eq:errL2}
  \begin{aligned}
   \|v\|_{L^\infty(I;L^2)}& \lesssim e^{Ct}\|v_0\|_{L^2} + e^{Ct}\|
   v\|^2_{L^{q_0}(I;L^4)}+\eps^{-d/4} e^{Ct}\| 
                            v\|^3_{L^{q_0}(I;L^{4})}\\
    &\quad+  e^{Ct}\|\eps^2\Delta \u_0\|_{L^2}.
    \end{aligned}
  \end{equation}
  We note that the constant $C$ can essentially be taken equal to one
  (slightly above),
  since there is no constant in the $L^2$-estimate, and we have
  absorbed algebraic terms $|I|^a=t^a$ in the exponential.
We then use Strichartz estimates again, but with the $L^{q_0}L^4$-norm
on the left hand side:
\begin{align*}
  \|v\|_{L^{q_0}(I;L^4)}&\lesssim \eps^{-d/4}\|v_0\|_{L^2} +
                          \eps^{-d/4}\| |\u_0|^2 
  v\|_{L^1(I;L^2)} + \eps^{-d/4}\|\u_0 v^2\|_{L^{1}(I;L^2)}\\
&\quad +\eps^{-d/2} \|
  v^3\|_{L^{q_0'}(I;L^{4/3})}  + \eps^{-d/4}\|\eps^2\Delta \u_0\|_{L^{1}(I;L^2)}\\
&\lesssim \eps^{-d/4}\|v_0\|_{L^2} + \eps^{-d/4}\| 
  v\|_{L^1(I;L^2)} + \eps^{-d/4}\|v\|^2_{L^{2}(I;L^4)}\\
&\quad +\eps^{-d/2} \|
  v\|^3_{L^{3q_0'}(I;L^{4})}  + \eps^{-d/4} T \|\eps^2\Delta
                                                           \u_0\|_{L^2}\\
&\lesssim \eps^{-d/4}\|v_0\|_{L^2} +
    \eps^{-d/4}\|v\|^2_{L^{q_0}(I;L^4)} +\eps^{-d/2} \| 
  v\|^3_{L^{q_0}(I;L^{4})}  + 
\eps^{-d/4} T \|\eps^2\Delta   \u_0\|_{L^2}\\
+ \eps^{-d/4}&\( e^{Ct}\|v_0\|_{L^2} + e^{Ct}\|
   v\|^2_{L^{q_0}(I;L^4)}+\eps^{-d/4} e^{Ct}\| 
  v\|^3_{L^{q_0}(I;L^{4})}  +  e^{Ct}\|\eps^2\Delta \u_0\|_{L^2}\),
\end{align*}
where we have estimated $\|v\|_{L^1(I;L^2)}$ by the first step. It
yields the largest contribution, and thus
\begin{equation}\label{eq:L4}
\begin{aligned}
   \|v\|_{L^{q_0}(I;L^4)} & \lesssim  \eps^{-d/4} e^{Ct}\|v_0\|_{L^2}
                            + \eps^{-d/4}  e^{Ct}\| 
   v\|^2_{L^{q_0}(I;L^4)}\\
&\quad+\eps^{-d/2} e^{Ct}\| 
  v\|^3_{L^{q_0}(I;L^{4})}  +  \eps^{-d/4}  e^{Ct}\|\eps^2\Delta \u_0\|_{L^2}.
\end{aligned}
\end{equation}
We argue by bootstrap. So long as
\begin{equation*}
  \|v\|_{L^{q_0}(0,t;L^4)}\le \eta \eps^{d/4},
\end{equation*}
for some  $\eta>0$ sufficiently small,
the terms  $\|v\|_{L^{q_0}(0,t;L^4)}$ on the right hand side of
\eqref{eq:L4} are absorbed by the left hand side.

One can take $\eta= \eps^\iota$ with $\iota>0$, and an  Ehrenfest time
is allowed, like $c\log\frac{1}{\eps}$ (until the exponential becomes
too big). More precisely:
\begin{align*}
  &\|v_0\|_{L^2}\le \eta \eps^{d/2}\quad \text{or}\quad \eps^{d/2+\iota},\\
  & \eps^2\|\Delta \u_0\|_{L^2}\le \eta \eps^{d/2}\quad \text{or}\quad
    \eps^{d/2+\iota}. 
\end{align*}
In view of Lemma~\ref{lem:u0Hs}, the last condition follows from
\begin{equation*}
  \eps^{2-d/2} \ll \delta^{1+d/4-\alpha}, 
\end{equation*}
as $1+d/4>\alpha$ under the assumptions of
Theorem~\ref{theo:stab-strichartz}. 
\begin{remark}
  If $\alpha>1+d/4$
(see Remark~\ref{rem:other}), no extra smallness assumption is
needed. 
\end{remark}
Once the bootstrap condition is granted, we resume the estimate
\eqref{eq:errL2},
\begin{equation*}
  \|v\|_{L^\infty(0,t;L^2)}\lesssim  \eps^{d/2}  e^{Ct}.
\end{equation*}
\noindent {\bf Second case: $\si=2$ and $d=1$.} We have
\begin{align*}
  \|v\|_{L^qL^r}&\lesssim \eps^{-2/q}\|v_0\|_{L^2} + \sum_{j=1}^5
    \eps^{-2/q-2/q_j} \|\u_0^{5-j}v^j\|_{L^{q_j'}L^{r_j'}} +
    \eps^{-2/q}|I| \|\eps^2\Delta \u_0\|_{L^2}\\
&\lesssim \eps^{-2/q}\|v_0\|_{L^2} + \sum_{j=1}^5
    \eps^{-2/q-2/q_j} \|v^j\|_{L^{q_j'}L^{r_j'}} +
    \eps^{-2/q}|I| \|\eps^2\Delta \u_0\|_{L^2},
\end{align*}
where all the above pairs are admissible. We now choose successively
$(q,r)=(\infty,2)$ and $(q,r)=(6,6)$. 
\begin{itemize}
\item For $j=1$, we choose $(q_1,r_1) = (\infty,2)$.
\item For $j=2$, we choose $(q_2,r_2) = (12,3)$, and use H\"older
  inequality
  \begin{equation*}
    \|v^2\|_{L^{12/11}L^{3/2}}\le \|v\|_{L^{4/3}L^2}\|v\|_{L^6L^6}.
  \end{equation*}
\item For $j=3$, we choose $(q_3,r_3)= (\infty,2)$, and write
  \begin{equation*}
    \|v^3\|_{L^1L^2} =\|v\|_{L^3L^6}^3\le |I|^{1/2} \|v\|^3_{L^6L^6}.
  \end{equation*}
\item For $j=4$, we choose $(q_4,r_4)=(12,3)$, and use H\"older
  inequality
  \begin{equation*}
    \|v^4\|_{L^{12/11}L^{3/2}}=\|v\|^4_{L^{11/3}L^6}\le
    |I|^{7/66}\|v\|^4_{L^6L^6}. 
  \end{equation*}
\item For $j=5$, we choose $(q_5,r_5)=(6,6)$, and write
  \begin{equation*}
    \|v^5\|_{L^{6/5}L^{6/5}} = \|v\|^5_{L^6L^6}. 
  \end{equation*}
\end{itemize}
We come up with:
\begin{align*}
  \|v\|_{L^\infty L^2}&\lesssim \|v_0\|_{L^2} +  \|v\|_{L^1 L^2} +
  \eps^{-1/6}\|v\|_{L^6L^6} |I|^{3/4}\|v\|_{L^\infty L^2}+ |I|^{1/2}
                        \|v\|^3_{L^6L^6} \\
&\quad+\eps^{-1/6}  |I|^{7/66}\|v\|^4_{L^6L^6}+
         \eps^{-1/3}\|v\|^5_{L^6L^6}+|I|\|\eps^2\Delta \u_0\|_{L^2}. 
\end{align*}
Gr\"onwall lemma implies
\begin{align*}
  \|v\|_{L^\infty L^2}&\lesssim e^{Ct}\|v_0\|_{L^2} +
  \eps^{-1/6}\|v\|_{L^6L^6}e^{Ct} \|v\|_{L^\infty L^2}+ e^{Ct}
                        \|v\|^3_{L^6L^6} \\
&\quad+\eps^{-1/6} e^{Ct} \|v\|^4_{L^6L^6}+
         \eps^{-1/3}e^{Ct}\|v\|^5_{L^6L^6}+e^{Ct}\|\eps^2\Delta \u_0\|_{L^2}. 
\end{align*}
The bootstrap condition is now
\begin{equation}\label{eq:bootstrap-quintic}
   \eps^{-1/6}e^{Ct}\|v\|_{L^6L^6}\ll 1 ,
\end{equation}
which implies that in the above estimate, the second term on the right hand side
is absorbed by the left hand side,
\begin{align*}
  \|v\|_{L^\infty L^2}&\lesssim e^{Ct}\|v_0\|_{L^2} + e^{Ct}
                        \|v\|^3_{L^6L^6} +\eps^{-1/6} e^{Ct} \|v\|^4_{L^6L^6}\\
&\quad+
         \eps^{-1/3}e^{Ct}\|v\|^5_{L^6L^6}+e^{Ct}\|\eps^2\Delta \u_0\|_{L^2}. 
\end{align*}
Applying Strichartz inequality again, changing only $(q,r)$ to consider
$(q,r)=(6,6)$, we infer
\begin{align*}
  \|v\|_{L^6 L^6}&\lesssim \eps^{-1/3}e^{Ct}\|v_0\|_{L^2} 
 +\eps^{-1/3} e^{Ct}
   \|v\|^3_{L^6L^6} +\eps^{-1/2} e^{Ct} \|v\|^4_{L^6L^6}\\
&\quad+
         \eps^{-2/3}e^{Ct}\|v\|^5_{L^6L^6}+\eps^{-1/3}e^{Ct}\|\eps^2\Delta
                                              \u_0\|_{L^2}.  
\end{align*}
Under the bootstrap assumption \eqref{eq:bootstrap-quintic}, the terms
involving $\|v\|_{L^6L^6}$ on the right hand side are absorbed by the
left hand side, and 
\begin{equation*}
   \|v\|_{L^6 L^6}\lesssim \eps^{-1/3}e^{Ct}\|v_0\|_{L^2}
   +\eps^{-1/3}e^{Ct}\|\eps^2\Delta    \u_0\|_{L^2}.
\end{equation*}
Therefore, the assumption on $v_0$ and $\u_0$ for the argument to be
conclusive is, like in the one-dimensional cubic case,
\begin{equation*}
  \|v_0\|_{L^2} + \|\eps^2\Delta    \u_0\|_{L^2}\ll \sqrt\eps. 
\end{equation*}
Invoking Lemma~\ref{lem:u0Hs} again, the second condition is satisfied
provided that
\begin{equation*}
  \eps^2\delta^{\frac{1}{5}-1-\frac{1}{4}}\ll
  \sqrt\eps\Longleftrightarrow
  \eps^{2-\frac{d}{2}}\ll
  \delta^{\frac{d}{4}+1-\frac{1}{2\si+1}},\quad \text{as }d=1 \text{
    and }\si=2, 
\end{equation*}
and we conclude like in the first case $\si=1$, thus proving
Theorem~\ref{theo:stab-strichartz}.

\section{Numerical simulations}
\label{sec:num}

We conclude this paper by exhibiting numerically some solutions constructed above. 
We consider the case $d = 1$ and the cubic nonlinearity. 
All numerical simulations are done using $N = 2^{14}$ grid points on a
interval $[-2\pi,2\pi]$. In Figure~\ref{fig1} we illustrate
Theorem~\ref{theo:law-f} by plotting the solution $\widehat
u_0(\xi)$ obtained by computing $f(x)^{\frac13}$ and taking the
discrete Fourier transform. We observe the expected power law
$|\xi|^{-\frac53}$ on an interval of larger size when $\delta$ gets smaller.
\begin{figure}[ht]
\begin{center}
   \rotatebox{0}{\resizebox{!}{0.5\linewidth}{%
   \includegraphics{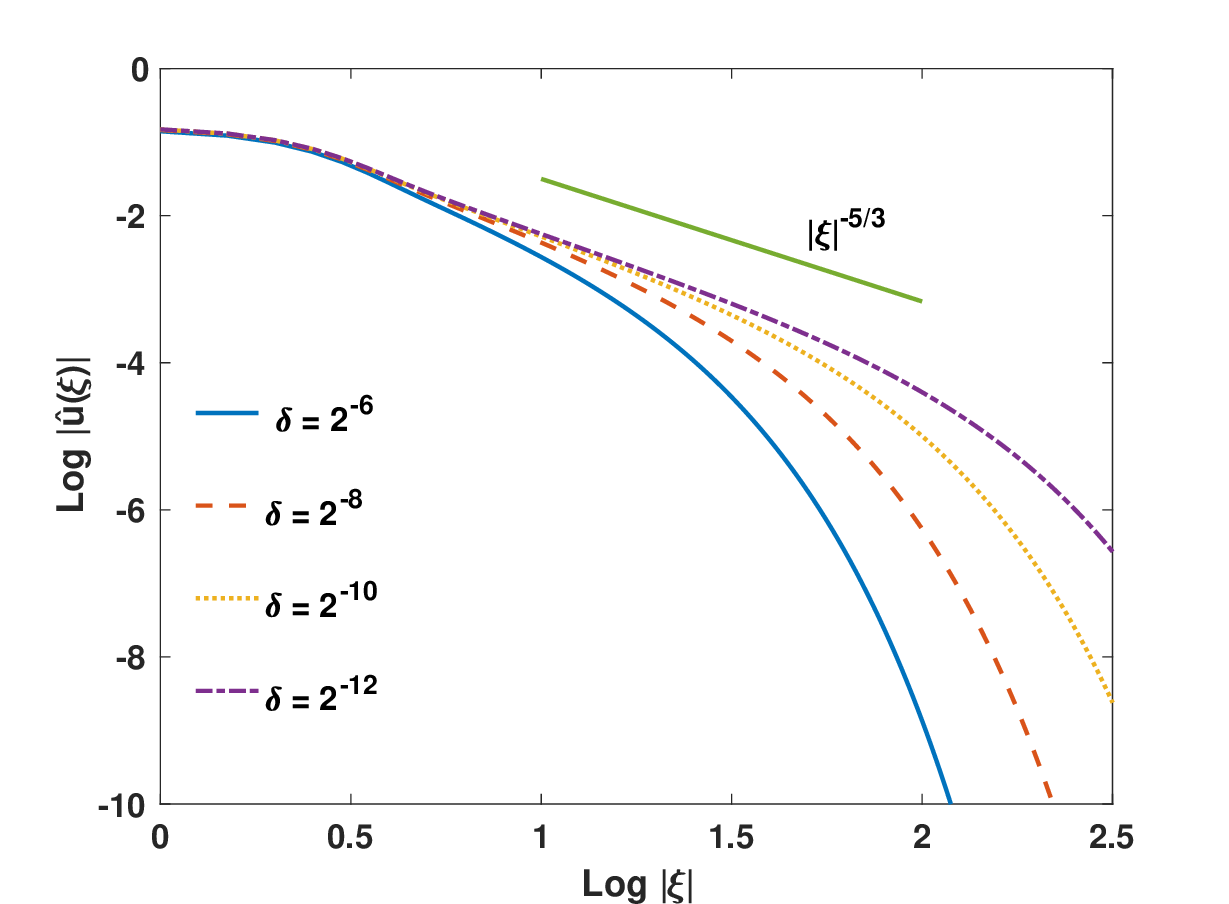}}}
\end{center}
\caption{$\hat u_0(\xi)$ in the case $P= 0$, $d = 1$, $\varepsilon = 0$}
\label{fig1}
\end{figure}

In Figure~\ref{fig2}, we fix $\delta$ and compute the solutions
$\widehat u_\varepsilon(\xi)$ given by Theorem~\ref{theo:constr-stat}
for various values of $\varepsilon$. We observe that when $\varepsilon$
becomes small enough, the solution indeed converges towards $u_0$. The
numerical scheme used to calculate $u_\varepsilon$ is the
implementation of the iterative scheme \eqref{iterative}. The
discretization of the Laplace operator is done by standard finite
differences with Dirichlet boundary conditions, which allows a simple
inversion of the linearized operator. Note that when $\varepsilon$
becomes too large, the solution becomes sensitive to the numerical
discretization and the existence of solution is unclear.  

\begin{figure}[ht]
\begin{center}
   \rotatebox{0}{\resizebox{!}{0.5\linewidth}{%
   \includegraphics{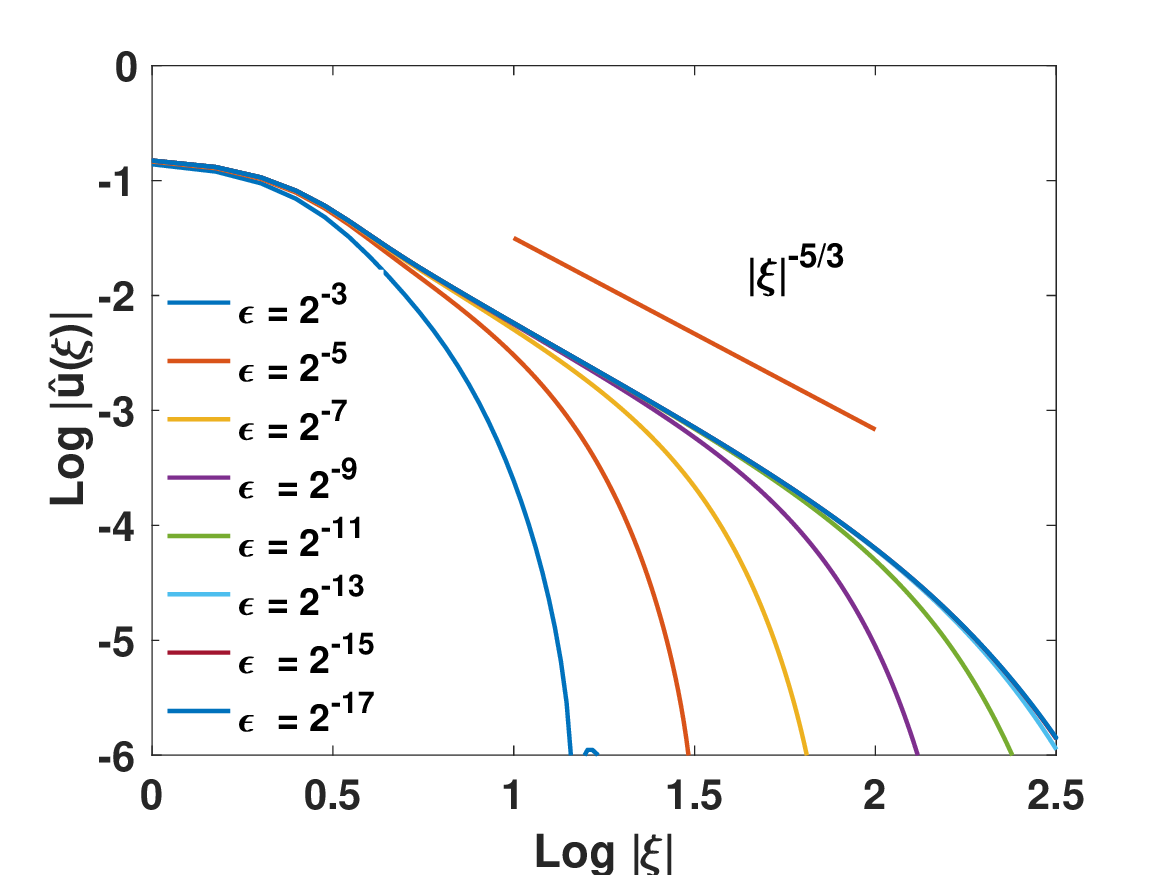}}}
\end{center}
\caption{$\hat u_0(\xi)$ in the case $P= 0$, $d = 1$, $\delta = 10^{-4}$}
\label{fig2}
\end{figure}

In Figures~\ref{fig3} and \ref{fig4} we repeat the same computations,
in the case $P = 1$ given by Theorems~\ref{theo:cubic} and
\ref{theo:constr-stat}, confirming the expected power law
$|\xi|^{-2}$.  

\begin{figure}[ht]
\begin{center}
   \rotatebox{0}{\resizebox{!}{0.5\linewidth}{%
   \includegraphics{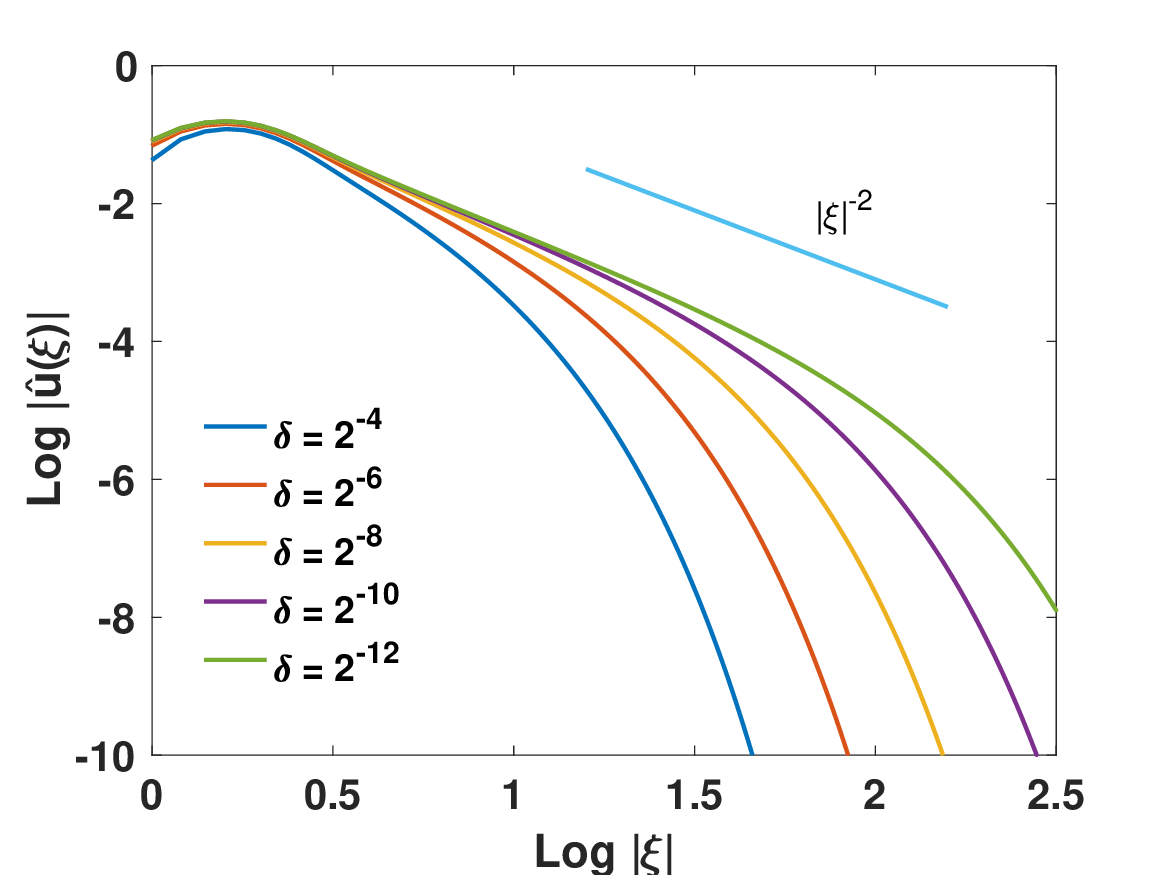}}}
\end{center}
\caption{$\hat u_0(\xi)$ in the case $P = 1$, $d = 1$, $\varepsilon = 0$ and $\delta = 2^{-j}$}
\label{fig3}
\end{figure}

\begin{figure}[ht]
\begin{center}
   \rotatebox{0}{\resizebox{!}{0.5\linewidth}{%
   \includegraphics{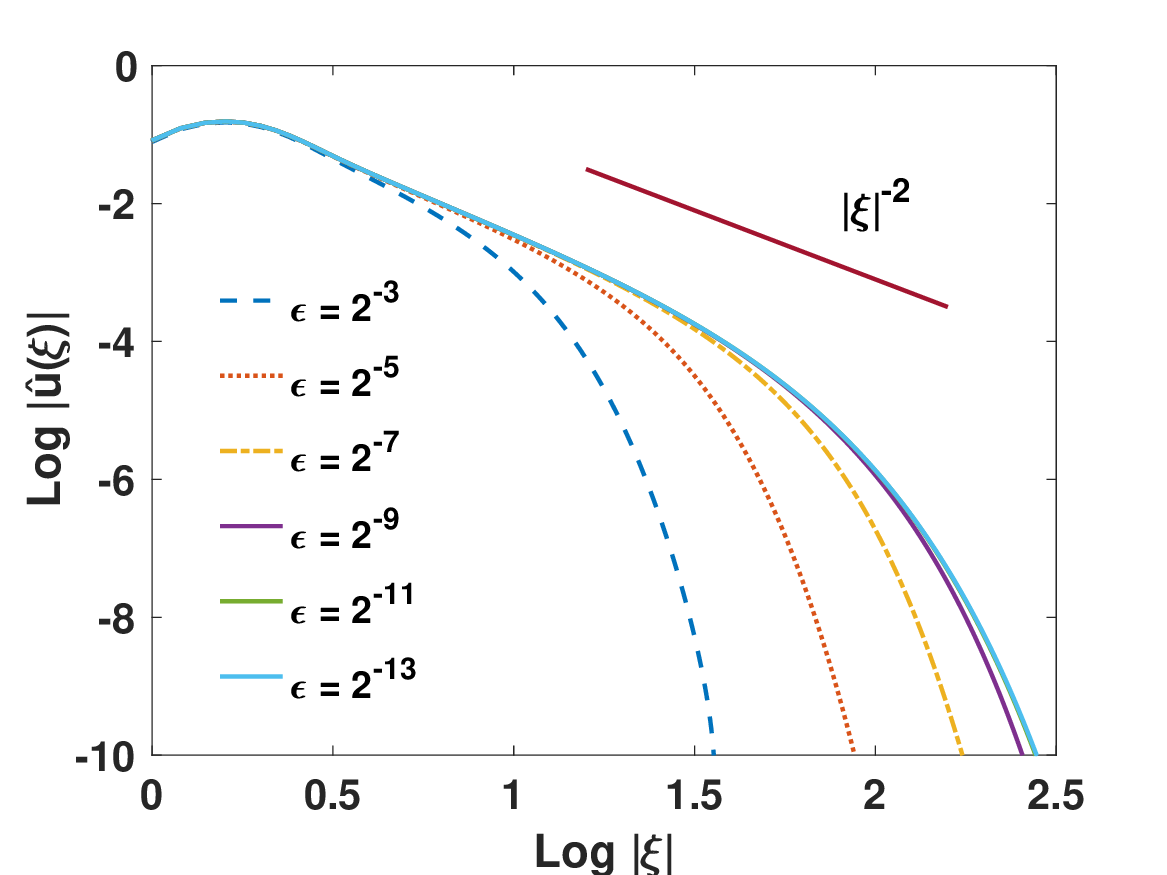}}}
\end{center}
\caption{$\hat u_\varepsilon(\xi)$ in the case $P = 1$, $d = 1$, $\varepsilon = \delta 2^{-j}$ and $\delta = 2^{-10}$}
\label{fig4}
\end{figure}

\bibliographystyle{abbrv}
\bibliography{forcing}
\end{document}